\newtheorem{theorem}{Theorem}[section]
\newtheorem{lemma}[theorem]{Lemma}
\theoremstyle{definition}
\theoremstyle{remark}
\numberwithin{equation}{section}
\theoremstyle{definition}
\newcommand{\g}{\mathnormal{g}}
\newcommand{\overbar}[1]{\mkern 1.5mu\overline{\mkern-1.5mu#1\mkern-1.5mu}\mkern 1.5mu}
\newcommand{\ms}[1]{\mathscr{#1}}
\newcommand\reallywidehat[1]{%
\savestack{\tmpbox}{\stretchto{%
  \scaleto{%
    \scalerel*[\widthof{\ensuremath{#1}}]{\kern-.6pt\bigwedge\kern-.6pt}%
    {\rule[-\textheight/2]{1ex}{\textheight}}
  }{\textheight}%
}{0.5ex}}%
\stackon[1pt]{#1}{\tmpbox}%
}
\newcounter{parno}[paragraph]
\newcounter{subparno}[section]
\begin{document}

\title{On Automorphism Groups of Hardy Algebras}

\author{Rene Ardila}




\dedicatory{This is a pre-print of an article published in the Annals of Functional Analysis. The final authenticated version is available online at \url{https://doi.org/10.1007/s43034-020-00079-5}.}

\begin{abstract}
Let $E$ be a $W^{*}$-correspondence and let $H^{\infty}(E)$ be the associated Hardy algebra. The unit disc of intertwiners $\mathbb{D}((E^{\sigma})^{*})$ plays a central role in the study of $H^{\infty}(E)$.  We show a number of results related to the automorphism groups of both  $H^{\infty}(E)$ and $\mathbb{D}((E^{\sigma})^{*})$. We find a matrix representation for these groups and describe  several features of their algebraic structure. Furthermore, we show an application of $Aut(\mathbb{D}({(E^{\sigma}})^*))$ to the study of Morita equivalence of $W^{*}$-correspondences.

\end{abstract}

\maketitle




\section{Introduction} \label{Introduction}
This note contributes to a circle of ideas developed by Muhly and Solel related to automorphisms of the Hardy algebra $H^{\infty}(E)$. In \cite{Muhly2008b}, they showed that the automorphisms of  $H^{\infty}(E)$ are obtained by composition with certain biholomorphic automorphisms of $\mathbb{D}((E^{\sigma})^{*})$, the open unit disc of the intertwining space. As a result, we can view the automorphism group of $H^{\infty}(E)$ as a subgroup of the automorphism group of $\mathbb{D}({(E^{\sigma}})^*)$. That is, $Aut(H^{\infty}(E)) \leq Aut(\mathbb{D}({(E^{\sigma}})^*))$. We answer a number of questions related to the algebraic structure of both groups as well as the relationship between them. We find a matrix representation for both groups and we find a third group containing both automorphism groups. In the last section, we show how $Aut(\mathbb{D}({(E^{\sigma}})^*))$ can be used to develop a categorical approach to Morita equivalence of $W^{*}$-correspondences.

\section{Preliminaries} \label{preliminaries}
For any Hilbert spaces $H$ and $K$, let $B(H,K)$ denote the Banach space of all bounded linear operators from $H$ to $K$ with the operator norm. A \emph{$J^{*}$-algebra} is a closed complex-linear subspace $\mathcal{U}$ of $B(H,K)$ such that $AA^{*}A \in \mathcal{U}$ whenever $A \in \mathcal{U}$. A right $C^{*}$-module $E$ over a $C^{*}$-algebra $A$ is said to be \emph{selfdual} if every continuous $A$-module map $f:E\to A$ is of the form $f(\cdot)=\langle y, \cdot\rangle$, for some $y\in E$.  We say that $E$ is a right \emph{$W^{*}$-module} if $E$ is a selfdual right $C^{*}$-module over a $W^{*}$-algebra. We write $\ms{L}_{A}(E)$ (or simply $\ms{L}(E)$) for the space of \emph{adjointable} $A$-module maps on $E$. (Recall that Paschke showed that if $E$ is a $W^{*}$-module, the set of adjointable $A$-module maps on $E$ is the $W^{*}$-algebra of bounded $A$-module maps on $E$ \cite[Corollary 3.5 and Proposition 3.10]{Paschke1973}). A \emph{$W^{*}$-correspondence} $(E,A)$ is a right $W^{*}$-module $E$ over a $W^{*}$-algebra $A$ for which there exists a unital normal $*$-homomorphism $\varphi$: $A \to \ms{L}(E)$. The center of a $W^{*}$-correspondence $(E,A)$ is the set $\mathfrak{Z}(E)=\{x\in E : a\cdot x=x\cdot a\text{ for all } a\in A\}$. 

The $W^{*}$-module tensor product $\overbar{\otimes}_{A}$ is defined to be the selfdual completion (the weak$^{*}$-completion) of the balanced $C^{*}$-module interior tensor product. The \emph{Fock space} $\ms{F}(E)$ of $E$ is defined as the ultraweak direct sum of all the tensor powers of $E$. That is, $\ms{F}(E):=\bigoplus^{wc}_{n\in \mathbb{N}_{0}}E^{\overbar{\otimes}n}$. This space is itself a a $W^{*}$-correspondence over $A$. The left action of $A$ on $\ms{F}(E)$ is given by the map $\varphi_{\infty}$ defined by $\varphi_{\infty}(a)=$ diag$(a,\varphi(a),\varphi ^{(2)}(a) ,\varphi ^{(3)}(a)   , \cdots)$ where $\varphi^{(n)}(a)(x_{1} \otimes x_{2}\otimes \cdots\otimes x_{n})=(\varphi(a)x_{1}) \otimes x_{2}\otimes \cdots\otimes x_{n} \in E^{\overbar{\otimes}n}$. Given $x\in E$, the \emph{creation operator} $T_{x}\in \ms{L}(\ms{F}(E))$ is defined by $T_{x}(\eta)=x\otimes \eta$, $\eta \in \ms{F}(E))$. The \emph{tensor algebra} over $E$, denoted $\mathcal{T}_{+}(E)$ is defined to be the norm closed subalgebra of $\ms{L}(\ms{F}(E))$ generated by $\varphi_{\infty}(A)$ and $\{T_{x}:x \in E \}$. The ultraweak closure of $\mathcal{T}_{+}(E)$ in $\ms{L}(\ms{F}(E))$ is called the \emph{Hardy Algebra} of $E$, and is denoted by $H^{\infty}(E)$. When $E=A=\mathbb{C}$, $H^{\infty}(E)$ is the classical Hardy space $H^{\infty}(\mathbb{T})$. That is, $H^{\infty}(E)$ is a noncommutative generalization of the classic Hardy algebra $H^{\infty}(\mathbb{T})$ of bounded analytic functions on the open unit disc. When $A= \mathbb{C}$ and $E=\mathbb{C}^{n}$, $H^{\infty}(E)$ is Popescu's noncommutative Hardy space $\ms{F}^{\infty}$ \cite{Popescu1991} and the noncommutative analytic Toeplitz algebra studied by Davidson and Pitts \cite{DPit98a,DPit98b}. More information about $H^{\infty}(E)$ can be found in \cite{Muhly2004a,Muhly2008b,Muhly2011a,Muhly2009}.

A \emph{completely contractive covariant representation} of $(E,A)$ is a pair $(T,\sigma)$ where $\sigma:A\to B(H)$ is a normal $*$-representation of $A$ and $T:E\to B(H)$ is a linear, completely contractive $w^{*}$-continuous representation of $E$ satisfying $T(axb)=\sigma(a)T(x)\sigma(b)$ for all $x\in E$ and $a,b\in A$. As shown in \cite{Muhly2004a}, the linear map $\widetilde{T}$ defined on the algebraic tensor product $E\otimes H$ by $\widetilde{T}(x\otimes h)=T(x)h$ extends to an operator of norm at most 1 on the completion $E\overbar{\otimes}_{\sigma}H$. The bimodule property of $T$ is equivalent to the equation $\widetilde{T}(\sigma^{E}\circ \varphi(a))= \widetilde{T}(\varphi(a)\otimes I)=\sigma(a)\widetilde{T}$ for all $a\in A$, which means that $\widetilde{T}$ intertwines the representations $\sigma$ and $\sigma^{E}\circ \varphi$ of $A$ on $H$ and $E\otimes H$ respectively. The space composed of all these intertwiners is called the \emph{intertwining space}, and it is usually denoted as $\mathcal{I}(\sigma ^{E} \circ \varphi, \sigma)$ or $(E^{\sigma})^{*}$. The unit ball of the intertwining space is then denoted by $\mathbb{D}(({E^{\sigma}})^*)$ or just $\mathbb{D}({E^{\sigma}}^*)$. The elements of $\mathbb{D}(({E^{\sigma}})^*)$ determine ultraweakly continuous representations of $H^{\infty}(E)$  \cite[Corollary 2.14]{Muhly2004a}.

As shown throughout the work of Muhly and Solel on Hardy algebras (for example in \cite[Remark 2.14]{Muhly2008b}), we can view the elements of $H^{\infty}(E)$ as $B(H)$-valued functions defined on $\mathbb{D}(({E^{\sigma}})^*)$. That is, we view $H^{\infty}(E)$ as an algebra of functions on its representation space. If $\eta^{*} \in \mathbb{D}(({E^{\sigma}})^*)$ and $X$ is an element of $H^{\infty}(E)$, the function $\widehat{X}$ is defined by $\widehat{X}(\eta^{*})=\sigma \times \eta^{*}(X)$, where $\sigma \times \eta^{*}$ is the ultraweakly continuous completely contractive representation of $H^{\infty}(E)$ determined by $\sigma$ and $\eta^{*}$. 

The study of Hardy algebras and unit balls of intertwiners also offers a unique perspective of noncommutative function theory. As shown in \cite{Muhly2013}, the family $\{\mathbb{D}({E^{\sigma}}^*)\}_{\sigma \in NRep(A)}$ satisfies several properties which are similar to the properties of the domains considered by J.L. Taylor in \cite[section 6]{Tay72c}, the fully matricial sets of Voiculescu \cite{Voi2005, Voi2010} and the noncommutative sets studied by Helton-Klep-McCullough \cite{HKM2011b,HKM2011a,HKMS2009}, Kaliuzhnyi-Verbovetskyi and Vinnikov \cite{Kaliuzhnyi2012,kaliuzhnyi2014foundations,K-VV2009}.

\section{$Aut(\mathbb{D}({(E^{\sigma}})^*))$ and $Aut(H^{\infty}(E))$}

In this section, building on the work of Muhly and Solel in \cite{Muhly2008b}, we give new results about $Aut(\mathbb{D}({E^{\sigma}}^*))$ and its relationship to $Aut(H^{\infty}(E))$, the automorphism group of the Hardy algebra of $E$.

Let $(E,A)$ be a $W^{*}$-correspondence. Let $Aut(H^{\infty}(E))$ denote the completely isometric, $w^{*}$-homeomorphic automorphisms of $H^{\infty}(E)$ fixing $\varphi(A)$ elementwise.  In \cite[Lemma 4.20 and Theorem 4.21]{Muhly2008b}, Muhly and Solel showed that each automorphism $\alpha$ in $Aut(H^{\infty}(E))$ is obtained by composition with some element in $Aut(\mathbb{D}(E^{\sigma})^{*})=\{g:\mathbb{D}(E^{\sigma})^{*}\to \mathbb{D}(E^{\sigma})^{*}$ biholomorphic$\}$ in the following way: $\widehat{\alpha(X)}(\eta^{*})=\widehat{X}(\g(\eta^{*}))$. Furthermore, if an element $g$ in  $Aut(\mathbb{D}(E^{\sigma})^{*})$ implements an automorphism $\alpha$ in $Aut(H^{\infty}(E))$ then $g$ preserves $\mathbb{D}\mathfrak{Z}((E^{\sigma})^{*})$ \cite[Theorems 4.9 and 4.21]{Muhly2008b}. With this in mind, we can think of $Aut(H^{\infty}(E))$ as a subgroup of $Aut(\mathbb{D}(E^{\sigma})^{*})$. That is, $Aut(H^{\infty}(E))\leq Aut(\mathbb{D}(E^{\sigma})^{*})$.


Since $(E^{\sigma})^{*}$ is a $J^{*}$ algebra 
, the biholomorphic automorphisms $\g$ in $ Aut(\mathbb{D}({E^{\sigma}}^*))$ are of the form $\g= \omega  \circ \g_{\gamma}$ \cite[Definition 1 and Theorem 3]{Harris1974}, where $\omega$ is a surjective linear isometry on $(E^{\sigma})^{*}$, $\gamma\in \mathbb{D}({E^{\sigma}})$ and $g_{\gamma}$ is a  M\"{o}bius transformation of $\mathbb{D}(E^{\sigma})^{*}$ given by $\g _{\gamma}(\eta^{*})=\Delta_{\gamma}(I_{H}-\eta^{*}\gamma)^{-1}(\gamma^{*}-\eta^{*})  \Delta_{\gamma ^{*}}^{-1}$, where $\Delta_{\gamma}=(I_{H}- \gamma^{*}\gamma)^{1/2}$ and $\Delta_{\gamma ^{*}}=(I_{E \otimes H}-\gamma\gamma^{*})^{1/2}$ \cite[equation (25)]{Muhly2008b}. Each M\"{o}bius map $\g _{\gamma}$ is a biholomorphic automorphism of $Aut(\mathbb{D}({E^{\sigma}}^*))$ mapping $\gamma^{*}$ to $0$ and $0$ to $\gamma^{*}$, thus satisfying $\g _{\gamma}^{2}=id$. Therefore, for every pair of intertwiners $\eta^{*}_{1}, \eta^{*}_{2}\in \mathbb{D}({E^{\sigma}}^*)$, the map $\g _{\eta_{2}}\circ\g _{\eta_{1}}$ takes $\eta^{*}_{1}$ to $\eta^{*}_{2}$. That is, $\mathbb{D}({E^{\sigma}}^*)$ is a homogeneous domain.
 
\begin{lemma}\label{uniquedecomposition}
Let $g\in Aut(\mathbb{D}({E^{\sigma}}^*))$. Then $g=\omega\circ g_{g^{-1}(0)^{*}}$. Furthermore, this decomposition is unique. That is, if $g=\omega\circ g_{g^{-1}(0)^{*}}=\omega '\circ g_{\gamma}$, then $\omega=\omega '$ and $\gamma =g^{-1}(0)^{*}$.
\end{lemma}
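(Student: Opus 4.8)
The plan is to read off the base point $\gamma$ of the Harris decomposition from the single value $g^{-1}(0)$, using two facts already recorded above: a surjective linear isometry $\omega$ of $(E^{\sigma})^{*}$ necessarily satisfies $\omega(0)=0$ (it is linear), and the M\"obius map $g_{\gamma}$ sends $\gamma^{*}$ to $0$.

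First I would take an arbitrary decomposition $g=\omega\circ g_{\gamma}$ of the form guaranteed by Harris \cite{Harris1974}, with $\omega$ a surjective linear isometry of $(E^{\sigma})^{*}$ and $\gamma\in\mathbb{D}(E^{\sigma})$. Evaluating at $\gamma^{*}\in\mathbb{D}({E^{\sigma}}^{*})$ gives $g(\gamma^{*})=\omega(g_{\gamma}(\gamma^{*}))=\omega(0)=0$. Since $g$ is a biholomorphic automorphism of $\mathbb{D}({E^{\sigma}}^{*})$ it is in particular a bijection, and $0\in\mathbb{D}({E^{\sigma}}^{*})$, so $g^{-1}(0)$ is a well-defined point of the disc and the previous identity forces $\gamma^{*}=g^{-1}(0)$, i.e. $\gamma=g^{-1}(0)^{*}$. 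This already yields the asserted form $g=\omega\circ g_{g^{-1}(0)^{*}}$.

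For uniqueness, note that the computation of the previous paragraph used nothing about the decomposition except its shape, so it applies verbatim to any second decomposition $g=\omega'\circ g_{\gamma}$ with $\omega'$ a surjective linear isometry and $\gamma\in\mathbb{D}(E^{\sigma})$: it gives $(\gamma)^{*}=g^{-1}(0)=g^{-1}(0)$, hence $\gamma=g^{-1}(0)^{*}$, matching the first decomposition. Finally, from $g=\omega\circ g_{g^{-1}(0)^{*}}$ and $g_{g^{-1}(0)^{*}}^{2}=id$ (recorded above) we get $\omega=g\circ g_{g^{-1}(0)^{*}}$, and likewise $\omega'=g\circ g_{g^{-1}(0)^{*}}$, so $\omega=\omega'$.

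I do not expect any genuine obstacle here; the only points needing a word of care are that $0$ indeed lies in $\mathbb{D}({E^{\sigma}}^{*})$ (so that $g^{-1}(0)$ makes sense) and that the adjoint $\gamma\mapsto\gamma^{*}$ is the bijection $\mathbb{D}(E^{\sigma})\to\mathbb{D}({E^{\sigma}}^{*})$ tacitly used when passing between $\gamma$ and $\gamma^{*}$, both immediate from the definitions. The whole content of the lemma is the observation that in the Harris form the linear factor cannot move the origin, so the M\"obius factor is pinned down by where $0$ goes.
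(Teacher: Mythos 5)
Your proof is correct. The existence half coincides with the paper's: evaluate $g=\omega\circ g_{\gamma}$ at $\gamma^{*}$, use $g_{\gamma}(\gamma^{*})=0$ and $\omega(0)=0$, and invoke bijectivity of $g$ to conclude $\gamma=g^{-1}(0)^{*}$. For uniqueness you take a genuinely shorter route: you observe that this same evaluation pins down the M\"obius base point of \emph{any} decomposition $g=\omega'\circ g_{\gamma}$, and then recover $\omega'=g\circ g_{\gamma}$ from the involutivity $g_{\gamma}^{2}=\mathrm{id}$, so $\omega=\omega'$ is immediate. The paper instead writes $\omega'=\omega\circ g_{g^{-1}(0)^{*}}\circ g_{\gamma}$, applies the Harris factorization a second time to the composite $g_{g^{-1}(0)^{*}}\circ g_{\gamma}$ to express it as an isometry followed by a M\"obius map, and then argues that the resulting M\"obius factor, being forced to equal a linear isometry, must fix $0$, which yields $g_{\gamma}(g^{-1}(0))=0$ and hence $\gamma=g^{-1}(0)^{*}$. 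Both arguments are sound; yours avoids the second appeal to the Harris decomposition and the attendant bookkeeping of composites at no cost, since the determination of $\gamma$ from $g^{-1}(0)$ is already contained in the first computation. Your explicit attention to the passage between $\gamma$ and $\gamma^{*}$ (and to the fact that $0$ lies in the disc) is also welcome, as the paper's proof elides the adjoints in places, e.g.\ writing $\gamma=g^{-1}(0)$ where $\gamma^{*}=g^{-1}(0)$ is meant.
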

\begin{proof}
$g=\omega\circ g_{\gamma}$. So $g(\gamma)=\omega\circ g_{\gamma}(\gamma)=\omega(0)=0$. So $\gamma=g^{-1}(0)$. Now suppose there are $\omega'$ and $g_{\gamma}$ such that $\omega\circ g_{g^{-1}(0)^{*}}=g=\omega'\circ g_{\gamma}$. Then $\omega'=\omega\circ g_{g^{-1}(0)^{*}} \circ g_{\gamma}$. Since $g_{g^{-1}(0)^{*}} \circ g_{\gamma}=\omega'' \circ g_{(g_{g^{-1}(0)^{*}} \circ g_{\gamma})^{-1}(0)}$ for some linear isometry $\omega''$, we have $\omega'=\omega \circ \omega'' \circ g_{(g_{g^{-1}(0)^{*}} \circ g_{\gamma})^{-1}(0)}=W\circ g_{g_{\gamma}(g^{-1}(0))^{*}}$, since $\omega \circ \omega''$ is an isometry $W$ and $(g_{g^{-1}(0)^{*}}\circ g_{\gamma}) ^{-1}=g_{\gamma}(g_{g^{-1}(0)^{*}}(0))=g_{\gamma}(g^{-1}(0))$. Thus $g_{g_{\gamma}(g^{-1}(0))^{*}}=W^{-1}\circ \omega'$ is a linear isometry. So $0=g_{g_{\gamma}(g^{-1}(0))^{*}}(0)=g_{\gamma}(g^{-1}(0)) $. So $\gamma=g^{-1}(0)^{*}$ and $\omega'=\omega\circ g_{\gamma}^{2}=\omega$.
\end{proof}
As usual, we denote the center of a group $N$ by $Z(N)$.
\begin{theorem}\label{center}
$Z(Aut(\mathbb{D}({E^{\sigma}}^*))=\{id_{Aut(\mathbb{D}({E^{\sigma}}^*))} \}$ and  $Z(Aut(H^{\infty}(E))=\{id_{Aut(H^{\infty}(E))} \}$.
\end{theorem}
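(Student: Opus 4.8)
The plan is to show that any $g$ in the center of $Aut(\mathbb{D}((E^\sigma)^*))$ must be the identity, and then deduce the statement for $Aut(H^\infty(E))$ as a subgroup. First I would exploit homogeneity: since $\mathbb{D}((E^\sigma)^*)$ is a homogeneous domain, for any $\eta^*$ there is an automorphism (built from Möbius maps $g_{\eta_1}$, $g_{\eta_2}$ as in the discussion preceding Lemma \ref{uniquedecomposition}) moving any chosen point to any other. If $g$ is central and $h$ is any automorphism with $h(0)=\eta^*$, then $gh = hg$ forces $g(h(0)) = h(g(0))$, i.e. $g(\eta^*) = h(g(0))$. So once we pin down $g(0)$, the value $g(\eta^*)$ is determined at every point; the first sub-goal is therefore to prove $g(0)=0$.

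To get $g(0)=0$, I would test $g$ against the Möbius involutions $g_\gamma$, which fix $0 \leftrightarrow \gamma^*$ and satisfy $g_\gamma^2 = id$. Centrality gives $g \circ g_\gamma = g_\gamma \circ g$; evaluating at $0$ yields $g(\gamma^*) = g_\gamma(g(0))$. Writing $g(0) = \xi^*$ and combining with the previous paragraph's relation (take $h = g_\gamma$, so $h(0)=\gamma^*$ and $g(\gamma^*) = g_\gamma(\xi^*)$ again — consistent), I would instead also use $g \circ g_\gamma = g_\gamma \circ g$ evaluated at $\gamma^*$: $g(0) = g_\gamma(g(\gamma^*))$, so $\xi^* = g_\gamma(g(\gamma^*))$. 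Applying $g_\gamma$: $g_\gamma(\xi^*) = g(\gamma^*)$. Now the key point: using the unique decomposition from Lemma \ref{uniquedecomposition}, write $g = \omega \circ g_{g^{-1}(0)^*}$, so $g$ is linear iff $g^{-1}(0) = 0$ iff $\xi^* = 0$. I would show linearity directly: from $g \circ g_\gamma = g_\gamma \circ g$ for \emph{all} $\gamma$, and the fact that $g_\gamma(\xi^*) = g(\gamma^*)$ must hold identically in $\gamma$, differentiate (or specialize to small $\gamma$) to see that the derivative $Dg_\gamma$ at the relevant point is constrained in a way incompatible with $\xi^* \ne 0$; concretely, take $\gamma \to 0$, where $g_\gamma(\eta^*) = \Delta_\gamma(I - \eta^*\gamma)^{-1}(\gamma^* - \eta^*)\Delta_{\gamma^*}^{-1}$ degenerates to $\eta^* \mapsto -\eta^* + O(\gamma)$, and compare the two sides of $g(g_\gamma(\eta^*)) = g_\gamma(g(\eta^*))$ to first order. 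This should force $g(0) = 0$, hence $g = \omega$ is a linear isometry.

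With $g = \omega$ linear, centrality against each $g_\gamma$ reads $\omega \circ g_\gamma = g_{\omega(\gamma)^*} \circ \omega$ (conjugating a Möbius map by a linear isometry gives the Möbius map for the transported point — this is how the isometry group acts on the $g_\gamma$'s, cf. the manipulations in the proof of Lemma \ref{uniquedecomposition}). By the uniqueness in Lemma \ref{uniquedecomposition}, matching the $g_{(\cdot)}$-factors forces $\gamma = \omega(\gamma)$ for all $\gamma \in \mathbb{D}((E^\sigma))$, hence $\omega = id$ on a ball, hence on all of $(E^\sigma)^*$ by linearity. Therefore $g = id$, proving $Z(Aut(\mathbb{D}((E^\sigma)^*))) = \{id\}$. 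For the second assertion, I would argue that the conjugating maps used above lie in $Aut(H^\infty(E))$ when restricted appropriately — or more simply, note that any $\alpha$ central in $Aut(H^\infty(E))$ is implemented by some $g \in Aut(\mathbb{D}((E^\sigma)^*))$ preserving $\mathbb{D}\mathfrak{Z}((E^\sigma)^*)$, and rerun the same homogeneity/involution argument using the Möbius maps $g_\gamma$ with $\gamma \in \mathbb{D}\mathfrak{Z}((E^\sigma))$ (these still implement automorphisms of $H^\infty(E)$ and $\mathbb{D}\mathfrak{Z}$ is itself homogeneous), concluding $g|_{\mathbb{D}\mathfrak{Z}} = id$, which means $\widehat{\alpha(X)} = \widehat{X}$ on the center-disc and hence $\alpha = id$.

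The main obstacle I anticipate is the step forcing $g(0)=0$: centrality must be leveraged carefully because a priori $g$ is only biholomorphic, and the cleanest route is the first-order expansion of the identity $g \circ g_\gamma = g_\gamma \circ g$ as $\gamma \to 0$, which requires being a little careful with the operator-valued derivatives of the Möbius maps. Everything after linearity is bookkeeping with the uniqueness clause of Lemma \ref{uniquedecomposition}.
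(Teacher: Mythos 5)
Your argument for the first assertion is correct and is essentially the paper's: in both, the decisive facts are that a central $g$ must commute with $g_{0}=-id$, forcing $g(0)=0$, and must then commute with each involution $g_{\gamma}$, forcing $g(\gamma^{*})=\gamma^{*}$. Two remarks. First, no differentiation or first-order expansion is needed to get $g(0)=0$: your own identity $g(\gamma^{*})=g_{\gamma}(g(0))$, specialized \emph{exactly} at $\gamma=0$, reads $g(0)=-g(0)$. Second, your route to $g(\gamma^{*})=\gamma^{*}$ via the conjugation identity $\omega\circ g_{\gamma}\circ\omega^{-1}=g_{\omega(\gamma^{*})^{*}}$ together with \cref{uniquedecomposition} does work (one can even bypass the conjugation identity: by the uniqueness clause the M\"obius parts of $g\circ g_{\gamma}$ and $g_{\gamma}\circ g$ are determined by $(g\circ g_{\gamma})^{-1}(0)=\gamma^{*}$ and $(g_{\gamma}\circ g)^{-1}(0)=g^{-1}(\gamma^{*})$, so equality forces $g(\gamma^{*})=\gamma^{*}$); the paper reaches the same conclusion in one line from injectivity of $g_{\gamma}$: if $g(\gamma^{*})\neq\gamma^{*}$ then $g_{\gamma}(g(\gamma^{*}))\neq 0=g(g_{\gamma}(\gamma^{*}))$.

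The gap is in the second assertion. Having shown that the map $g'$ implementing a central $\alpha'$ fixes $\mathbb{D}\mathfrak{Z}((E^{\sigma})^{*})$ pointwise, you conclude that $\widehat{\alpha'(X)}=\widehat{X}$ on the center-disc and ``hence $\alpha'=id$.'' That last inference fails: point evaluations at \emph{central} intertwiners do not separate the points of $H^{\infty}(E)$ in general. For instance, with $A=\mathbb{C}$ and $E=\mathbb{C}^{2}$ the central intertwiners are the scalar points $(\lambda_{1}I,\lambda_{2}I)$, and evaluation there only sees the abelianization of $X$ (the nonzero element $T_{e_{1}}T_{e_{2}}-T_{e_{2}}T_{e_{1}}$ vanishes at every such point). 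What is actually needed is that $g'$ be the identity on all of $\mathbb{D}((E^{\sigma})^{*})$, where the evaluations do separate points of $H^{\infty}(E)$; this is how the paper proceeds, by deducing $g'\circ g=g\circ g'$ from $\widehat{X}(g'\circ g(\eta^{*}))=\widehat{X}(g\circ g'(\eta^{*}))$ and placing $g'$ in $Z(Aut(\mathbb{D}((E^{\sigma})^{*})))$, so that the first assertion applies. To salvage your version you would have to combine $g'|_{\mathbb{D}\mathfrak{Z}}=id$ with the structural form $g'(\eta^{*})=g_{\gamma}(\eta^{*})(u\otimes I_{H})$ of \cite[Theorem 4.21]{Muhly2008b} to force $\gamma=0$ and then pin down $u$ on all of $E$, not just on $\mathfrak{Z}(E)$ --- an extra argument your proposal does not supply.
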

\begin{proof}
First, note that $g_{0}=-id$. Clearly, $id \in Z( Aut(\mathbb{D}({E^{\sigma}}^*))$. Let $g=\omega\circ g_{g^{-1}(0)^{*}} \in Aut(\mathbb{D}({E^{\sigma}}^*))$. If $g^{-1}(0)\neq 0$ then $g_{0}\circ g(g^{-1}(0))=g_{0}(0)=0=\omega(0)\neq \omega\circ g_{g^{-1}(0)^{*}}(-g^{-1}(0))= \omega\circ g_{g^{-1}(0)^{*}}\circ g_{0}(g^{-1}(0))=g\circ g_{0}(g^{-1}(0))$. Thus, if $g\in Z( Aut(\mathbb{D}({E^{\sigma}}^*))$, we must have $g^{-1}(0)=0$. So $g(0)=0$. Assume $g\neq id$. Then there is $\gamma^{*}\in\mathbb{D}({E^{\sigma}}^*)$ such that $g(\gamma^{*})\neq \gamma^{*}$. Then  $g_{\gamma}\circ g(\gamma^{*})\neq g_{\gamma}(\gamma^{*})=0=g(0)=g\circ g_{\gamma}(\gamma^{*})$. So $g\notin Z(Aut(\mathbb{D}({E^{\sigma}}^*))$ and $Z(Aut(\mathbb{D}({E^{\sigma}}^*))=\{id\}$.

Now let $\alpha'\in Aut(H^{\infty}(E))$ and suppose $\alpha'$ commutes with every $\alpha\in Aut(H^{\infty}(E))$. Let $X\in H^{\infty}(E)$. $\widehat{\alpha(X)}(\eta^{*})=\widehat{X}(\g(\eta^{*}))$ and $\widehat{\alpha'(X)}(\eta^{*})=\widehat{X}(\g'(\eta^{*}))$. Then $\reallywidehat{\alpha\circ \alpha'(X)}(\eta^{*})=\reallywidehat{\alpha (\alpha'(X))}(\eta^{*})=\widehat{\alpha'(X)}(g(\eta^{*}))=\widehat{X}(g'\circ g(\eta^{*}))$ and $\reallywidehat{\alpha'\circ \alpha(X)}(\eta^{*})=\reallywidehat{\alpha' (\alpha(X))}(\eta^{*})=\widehat{\alpha(X)}(g'(\eta^{*}))= \widehat{X}(g\circ g'(\eta^{*}))$. Thus $g'\in Z(Aut(\mathbb{D}({E^{\sigma}}^*))$. So $g'=id_{Aut(\mathbb{D}({E^{\sigma}}^*))}$ and $Z(Aut(H^{\infty}(E))=\{id_{Aut(H^{\infty}(E))}\}$.
\end{proof}
Note that if $(E,A)$ is a $W^{*}$-graph correspondence and $g\in Aut(\mathbb{D}({E^{\sigma}}^*))$, then $g\in Aut(H^{\infty}(E))$ if and only if for each intertwiner $\eta^{*}=(T_{ij})\in \mathfrak{Z}((E^{\sigma})^{*})$, the zero blocks of $g(\eta^{*})$ are the same zero blocks of $\eta^{*}$ and the non-zero blocks of $g(\eta^{*})$ are multiples of identities. This follows from \cite[Corollary 4.2]{Ardila2019} and the fact that the elements in $ Aut(H^{\infty}(E))$ preserve $\mathfrak{Z}((E^{\sigma})^{*})$.


Our next goal is to give a matrix representation of $Aut(\mathbb{D}({E^{\sigma}}^*))$. With this in mind, we construct a set $\mathbb{P}$, which will allow us to express the elements of $Aut(\mathbb{D}({E^{\sigma}}^*))$ as matrices acting on $\mathbb{P}$ by right matrix multiplication.
Let $\mathbb{P}=\{(U, \eta ^{*})\} / \sim$ where $U$ is an invertible operator in $\sigma(A)'$
, $\eta ^{*} \in (E^{\sigma})^*$ and $(U_{1}, \eta_{1} ^{*}) \sim (U_{2}, \eta_{2} ^{*})$ if there is an invertible operator $C \in \sigma(A)'$ such that $(CU_{1}, C\eta_{1} ^{*}) = (U_{2}, \eta_{2} ^{*})$. The role of $\mathbb{P}$ in our analysis will be similar to the role played by the complex projective line in the study of Mobius transformations of the complex plane. So  we can think of the elements in the set 
\begin{equation*}
\{(U,U \eta ^{*})\quad|\quad U\text{ invertible in } \sigma(A)'\}
\end{equation*}
as ``homogeneous coordinates" of $\eta ^{*}$.

In particular, each $ \eta ^{*} \in (E^{\sigma})^*$ has ``homogeneous coordinates" $(I_{H}, \eta ^{*})$. So each $ \eta ^{*} \in (E^{\sigma})^*$ can be identified with the equivalence class $[(I_{H}, \eta ^{*})]$, which we will also denote by $[I_{H} \quad \eta ^{*}]$, so that we can view it as both an equivalence class and a 1 by 2 block matrix.

By \cite[Theorem 4.21(ii)]{Muhly2008b}, if $\g\in Aut(\mathbb{D}({E^{\sigma}}^*))$ implements $\alpha \in Aut(H^{\infty}(E))$, then there is a $\gamma^{*} \in \mathbb{D}\mathfrak{Z}(({E^{\sigma}}^*) $ and a unitary operator $u$ in $\ms{L}(E)$ such that $u(\mathfrak{Z}(E))=\mathfrak{Z}(E)$ and such that $\g(\eta^{*})=g_{\gamma}(\eta^{*})(u\otimes I_{H})$. The following theorem shows that we can represent $\g$ by the matrix 
$T=\left( 
\begin{matrix} 
\Delta_{\gamma}^{-1} & \gamma^{*} \Delta_{\gamma ^{*}}^{-1}(u\otimes I_{H}) \\
-\gamma \Delta_{\gamma}^{-1} & -\Delta_{\gamma ^{*}}^{-1}(u\otimes I_{H})
\end{matrix}
\right)$
 acting on $[I_{H} \quad \eta ^{*}]\in \mathbb{P}$ by right matrix multiplication.
\begin{theorem}
If $g\in Aut(\mathbb{D}({E^{\sigma}}^*))$ implements $\alpha\in Aut(H^{\infty}(E))$, so that $\g(\eta^{*})=g_{\gamma}(\eta^{*})(u\otimes I_{H})$, then
\begin{equation*}
[I_{H} \quad \eta ^{*}]
\left( 
\begin{matrix} 
\Delta_{\gamma}^{-1} & \gamma^{*} \Delta_{\gamma ^{*}}^{-1}(u\otimes I_{H}) \\
-\gamma \Delta_{\gamma}^{-1} & -\Delta_{\gamma ^{*}}^{-1}(u\otimes I_{H})
\end{matrix}
\right)
=[I_{H} \quad \g(\eta ^{*})]
\end{equation*}
\end{theorem}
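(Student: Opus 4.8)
The plan is to compute the right-hand side of the claimed matrix identity directly and verify it agrees with $[I_H \quad g(\eta^*)]$ as an element of $\mathbb{P}$, i.e. up to the equivalence relation $\sim$. First I would carry out the block matrix multiplication: with $T$ as given, the product $[I_H \quad \eta^*]\,T$ equals the $1 \times 2$ block row
\[
\bigl(\,\Delta_\gamma^{-1} - \eta^*\gamma\Delta_\gamma^{-1}\ ,\quad \gamma^*\Delta_{\gamma^*}^{-1}(u\otimes I_H) - \eta^*\Delta_{\gamma^*}^{-1}(u\otimes I_H)\,\bigr).
\]
The first entry is $(I_H - \eta^*\gamma)\Delta_\gamma^{-1}$ and the second is $(\gamma^* - \eta^*)\Delta_{\gamma^*}^{-1}(u\otimes I_H)$. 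Now set $C := (I_H - \eta^*\gamma)\Delta_\gamma^{-1}$; I would check that this is an invertible operator lying in $\sigma(A)'$, so that it is a legitimate representative of the equivalence class and can be factored out. Invertibility of $I_H - \eta^*\gamma$ on $\mathbb{D}(({E^\sigma})^*)$ is exactly what makes the Möbius map $g_\gamma$ well-defined (it appears in the formula $g_\gamma(\eta^*) = \Delta_\gamma(I_H - \eta^*\gamma)^{-1}(\gamma^*-\eta^*)\Delta_{\gamma^*}^{-1}$ quoted from \cite[equation (25)]{Muhly2008b}), and $\Delta_\gamma$ is invertible by definition; membership in $\sigma(A)'$ follows because $\eta^*,\gamma$ are intertwiners and $\gamma^* \in \mathbb{D}\mathfrak{Z}(({E^\sigma})^*)$, so products like $\eta^*\gamma$ commute with $\sigma(A)$ and $\Delta_\gamma = (I_H - \gamma^*\gamma)^{1/2}$ is a function of such an operator.

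Having justified that $C \in \sigma(A)'$ is invertible, I would factor it out on the left of the block row: $[I_H \quad \eta^*]\,T \sim [\,I_H \quad C^{-1}(\gamma^*-\eta^*)\Delta_{\gamma^*}^{-1}(u\otimes I_H)\,]$. It then remains to identify the second entry with $g(\eta^*)$. Computing,
\[
C^{-1}(\gamma^* - \eta^*)\Delta_{\gamma^*}^{-1}(u\otimes I_H) = \Delta_\gamma (I_H - \eta^*\gamma)^{-1}(\gamma^* - \eta^*)\Delta_{\gamma^*}^{-1}(u\otimes I_H) = g_\gamma(\eta^*)(u\otimes I_H),
\]
using the explicit form of the Möbius map. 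By hypothesis $g(\eta^*) = g_\gamma(\eta^*)(u\otimes I_H)$, so the second entry is precisely $g(\eta^*)$, and therefore $[I_H \quad \eta^*]\,T = [I_H \quad g(\eta^*)]$ in $\mathbb{P}$, which is the assertion.

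The main obstacle — really the only non-formal point — is the bookkeeping needed to confirm that $C = (I_H - \eta^*\gamma)\Delta_\gamma^{-1}$ genuinely lies in $\sigma(A)'$ and is invertible, since the whole argument rests on being allowed to cancel $C$ within an equivalence class of $\mathbb{P}$. Everything else is a routine unwinding of the block multiplication and a substitution of the formula for $g_\gamma$. I would also remark in passing that the same computation shows the matrix $T$ is (up to left multiplication by an element of $\sigma(A)'$) an invertible block matrix, so that right multiplication by $T$ really does give a well-defined action on $\mathbb{P}$ and is consistent with composition of automorphisms; but for the statement as given, the direct verification above suffices.
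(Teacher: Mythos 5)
Your proposal is correct and follows essentially the same route as the paper's proof: carry out the block multiplication, factor the invertible element $(I_H-\eta^*\gamma)\Delta_\gamma^{-1}$ of $\sigma(A)'$ out of the equivalence class, and recognize the remaining entry as $g_\gamma(\eta^*)(u\otimes I_H)=g(\eta^*)$. The paper's only additional content is an explicit power-series verification that $\Delta_\gamma^{-1}\in\sigma(A)'$ and that the second block entry lies in $(E^\sigma)^*$, which you handle equivalently by appealing to the intertwining relations and to the fact that $g$ maps $\mathbb{D}((E^\sigma)^*)$ into itself.
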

\begin{proof}

\begin{equation*}
\begin{split}
[I_{H} \quad \eta ^{*}] &
\left( 
\begin{matrix} 
\Delta_{\gamma}^{-1} & \gamma^{*} \Delta_{\gamma ^{*}}^{-1}(u\otimes I_{H}) \\
-\gamma \Delta_{\gamma}^{-1} & -\Delta_{\gamma ^{*}}^{-1}(u\otimes I_{H})
\end{matrix}
\right)\\
&=[\Delta_{\gamma}^{-1}-\eta ^{*}\gamma \Delta_{\gamma}^{-1} \qquad (\gamma^{*} \Delta_{\gamma ^{*}}^{-1}-\eta ^{*}\Delta_{\gamma ^{*}}^{-1})(u\otimes I_{H})] \\
&= [I_{H} \qquad (\Delta_{\gamma}^{-1}-\eta ^{*}\gamma \Delta_{\gamma}^{-1})^{-1} (\gamma^{*} \Delta_{\gamma ^{*}}^{-1}-\eta ^{*}\Delta_{\gamma ^{*}}^{-1})(u\otimes I_{H})]\\
&= [I_{H} \qquad ((I_{H}-\eta ^{*}\gamma) \Delta_{\gamma}^{-1})^{-1} (\gamma^{*}-\eta ^{*})\Delta_{\gamma ^{*}}^{-1}(u\otimes I_{H})]\\
&= [I_{H} \qquad \Delta_{\gamma}(I_{H}-\eta ^{*}\gamma)^{-1} (\gamma^{*}-\eta ^{*})\Delta_{\gamma ^{*}}^{-1}(u\otimes I_{H})]\\
&= [I_{H} \qquad g_{\gamma}(\eta ^{*})(u\otimes I_{H})]\\
&= [I_{H} \qquad g(\eta ^{*})]
\end{split}
\end{equation*}
We check that the right side of the first equality is an element of $\mathbb{P}$. Let $a\in A$ and $\eta_{1} ^{*}, \eta_{2}^{*}, \eta_{3}^{*}\in(E^{\sigma})^*$. Then $\eta_{1} ^{*}\eta_{2}\sigma(a)=\eta_{1} ^{*}(\varphi(a)\otimes I_{H})\eta_{2}=\sigma(a)\eta_{1} ^{*}\eta_{2}$. So $\eta_{1} ^{*}\eta_{2}\in \sigma(A)'$. Then, since we are assuming $\gamma \in \mathbb{D}({E^{\sigma}}^*))$, $\Delta_{\gamma}^{-1}=(I_{H}- \gamma^{*}\gamma)^{-1/2}=I_{H}+\frac{1}{2}\gamma^{*}\gamma + \frac{3}{8}\gamma^{*}\gamma\gamma^{*}\gamma+ \cdots$ is in $\sigma(A)'$. So $\Delta_{\gamma}^{-1}-\eta ^{*}\gamma \Delta_{\gamma}^{-1}\in\sigma(A)'$. Next, note that $\eta_{1} ^{*}\eta_{2}\eta_{3}^{*}(\varphi(a)\otimes I_{H})=\eta_{1} ^{*}\eta_{2}\sigma(a)\eta_{3}^{*}=\eta_{1} ^{*}(\varphi(a)\otimes I_{H})\eta_{2}\eta_{3}^{*}=\sigma(a)\eta_{1} ^{*}\eta_{2}\eta_{3}^{*}$. So $\eta_{1} ^{*}\eta_{2}\eta_{3}^{*}\in (E^{\sigma})^*$. Then $(\gamma^{*} \Delta_{\gamma ^{*}}^{-1}-\eta ^{*}\Delta_{\gamma ^{*}}^{-1})(u\otimes I_{H})=(\gamma^{*}-\eta ^{*})(I_{E\overbar{\otimes}_{A}H}- \gamma\gamma^{*})^{-1/2}(u\otimes I_{H})=(\gamma^{*}-\eta ^{*})(I_{E\overbar{\otimes}_{A}H}+\frac{1}{2}\gamma\gamma^{*} + \frac{3}{8}\gamma\gamma^{*}\gamma\gamma^{*}+ \cdots)(u\otimes I_{H})=((\gamma^{*}-\eta ^{*})+\frac{1}{2}(\gamma^{*}-\eta ^{*})\gamma\gamma^{*}+ \frac{3}{8}(\gamma^{*}-\eta ^{*})\gamma\gamma^{*}\gamma\gamma^{*}+ \cdots)(u\otimes I_{H})$ is also in $(E^{\sigma})^*$.
\end{proof}
Note that
$T=
\left( 
\begin{matrix} 
\Delta_{\gamma}^{-1} & \gamma^{*} \Delta_{\gamma ^{*}}^{-1}(u\otimes I_{H}) \\
-\gamma \Delta_{\gamma}^{-1} & -\Delta_{\gamma ^{*}}^{-1}(u\otimes I_{H})
\end{matrix}
\right)$ not only acts on the matrix $(I_{H} \quad \eta ^{*})$. It acts on the equivalence class $[I_{H} \quad \eta ^{*}]$. That is, no matter what representative of $[I_{H} \quad \eta ^{*}]$ is multiplied by $T$, the matrix product always equals $[I_{H} \quad g(\eta ^{*})]$:
\begin{equation*}
\begin{split}
[C \quad C\eta ^{*}]&
\left( 
\begin{matrix} 
\Delta_{\gamma}^{-1} & \gamma^{*} \Delta_{\gamma ^{*}}^{-1}(u\otimes I_{H}) \\
-\gamma \Delta_{\gamma}^{-1} & -\Delta_{\gamma ^{*}}^{-1}(u\otimes I_{H})
\end{matrix}
\right)\\
&=[C(\Delta_{\gamma}^{-1}-\eta ^{*}\gamma \Delta_{\gamma}^{-1}) \qquad C(\gamma^{*} \Delta_{\gamma ^{*}}^{-1}-\eta ^{*}\Delta_{\gamma ^{*}}^{-1}(u\otimes I_{H}))] \\
&=[\Delta_{\gamma}^{-1}-\eta ^{*}\gamma \Delta_{\gamma}^{-1} \qquad \gamma^{*} \Delta_{\gamma ^{*}}^{-1}-\eta ^{*}\Delta_{\gamma ^{*}}^{-1}(u\otimes I_{H})] \\
&= [I_{H} \qquad g(\eta ^{*})]
\end{split}
\end{equation*}
As we stated above, in general, any $\g\in Aut(\mathbb{D}({E^{\sigma}}^*))$ (not just one implementing an automorphism  $\alpha \in Aut(H^{\infty}(E))$ ) is of the form $g=\omega\circ  g_{\gamma}$. The linear isometries $\omega$ on $Aut(\mathbb{D}({E^{\sigma}}^*))$ are given by $\omega(\eta ^{*})=u\eta ^{*}v^{*}$, where $u$ and $v$ are unitaries in $B(H)$ and $B(E\overbar{\otimes}_{A}H)$ respectively, satisfying additional conditions ensuring that $u\eta ^{*}v^{*}\in({E^{\sigma}})^*$. For example, in the case when $(E,A)$ be a $W^{*}$ graph correspondence, we have the following result:
\begin{theorem}\label{isometries}
Let $(E,A)$ be a $W^{*}$ correspondence derived from a directed graph $G=(G^{0}, G^{1},r,s)$ (without sources). Let $\omega$ be a linear isometry of $(E^{\sigma})^*$ given by $\omega(\eta^{*})=u\eta^{*}v^{*}$, where $u$ and $v$ are unitaries on $H$ and $E \overbar{\otimes}H$ respectively. Then $u$ is a diagonal block matrix $u=\bigoplus \limits^{|G^{0}|}_{i=1}u_{i}$, where $u_{i}$ is a unitary on $H_{i}$, and $v^{*}$ is a block matrix $v^{*}= (v_{ij}^{*})$ satisfying the following:
\begin{enumerate}
\item $v_{ij}^{*}=0$ if $r(e_{i})\neq r(e_{j})$
\item For each $i \in \{1,2,\cdots|G^{1}| \}$, $\sum \limits^{|G^{1}|}_{j=1}v_{ij}^{*}v_{ij}=I_{H_{s(e_{i})}}$
\item For each $i,k (i \neq k)\in \{1,2,\cdots|G^{1}| \}$, $\sum \limits^{|G^{1}|}_{j=1}v_{ij}^{*}v_{kj}=0$
\end{enumerate}
\end{theorem}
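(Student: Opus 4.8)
The strategy is to make the $J^{*}$-algebra $(E^{\sigma})^{*}$ completely explicit in terms of the graph data, and then to translate the single requirement that $\omega(\eta^{*})=u\eta^{*}v^{*}$ again lie in $(E^{\sigma})^{*}$ for every $\eta^{*}$ into the stated block conditions on $u$ and $v^{*}$.

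First I would fix coordinates. As $A=\ell^{\infty}(G^{0})$ and $\sigma$ is normal, the mutually orthogonal projections $\sigma(\delta_{v})$, $v\in G^{0}$, sum to $I_{H}$ and give $H=\bigoplus_{v\in G^{0}}H_{v}$ with $H_{v}=\sigma(\delta_{v})H$. With $\{\xi_{e}\}_{e\in G^{1}}$ the canonical generators of $E$, so that $\varphi(\delta_{v})\xi_{e}=\delta_{v,r(e)}\xi_{e}$ and $\xi_{e}\cdot\delta_{v}=\delta_{v,s(e)}\xi_{e}$, balancing over $A$ yields $E\overbar{\otimes}_{\sigma}H\cong\bigoplus_{e\in G^{1}}H_{s(e)}$, on which $\sigma^{E}\circ\varphi(\delta_{v})$ is the diagonal operator with $e$-th entry $\delta_{v,r(e)}I_{H_{s(e)}}$. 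Writing $\eta^{*}\colon E\overbar{\otimes}_{\sigma}H\to H$ as a block matrix $(\eta^{*}_{v,e})$, $\eta^{*}_{v,e}\in B(H_{s(e)},H_{v})$, the relation $\eta^{*}(\sigma^{E}\circ\varphi(a))=\sigma(a)\eta^{*}$ forces $\eta^{*}_{v,e}=0$ unless $v=r(e)$; hence $(E^{\sigma})^{*}$ consists exactly of the block matrices supported on the pattern $\{(r(e),e)\}$, the surviving blocks $T_{e}:=\eta^{*}_{r(e),e}\in B(H_{s(e)},H_{r(e)})$ being subject only to joint boundedness. (One may instead invoke the concrete description of intertwiners in \cite{Ardila2019}.) In particular, for each edge $e_{0}$ and each $T\in B(H_{s(e_{0})},H_{r(e_{0})})$ the ``elementary'' intertwiner whose only nonzero block is $T$ on $e_{0}$ belongs to $(E^{\sigma})^{*}$.

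Second, I would write $u=(u_{vw})_{v,w\in G^{0}}$ and $v^{*}=(v^{*}_{ij})_{i,j}$ ($i,j$ ranging over $G^{1}$, $v^{*}_{ij}\in B(H_{s(e_{j})},H_{s(e_{i})})$) and run the elementary intertwiners through $\omega$: a direct block computation gives $(u\,\eta^{*}\,v^{*})_{v,j}=u_{v,r(e_{i_{0}})}\,T\,v^{*}_{i_{0},j}$ when $\eta^{*}$ is elementary on $e_{i_{0}}$ with block $T$, and membership in $(E^{\sigma})^{*}$ forces $u_{v,r(e_{i_{0}})}\,T\,v^{*}_{i_{0},j}=0$ for all $T$ whenever $v\neq r(e_{j})$. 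Since the graph has no sources, every vertex equals $r(e_{i_{0}})$ for some $i_{0}$, so this identity is available in every vertex-column of $u$; combining it with the facts (from unitarity of $v^{*}$) that no block-row of $v^{*}$ vanishes and that distinct block-rows of $v^{*}$ are orthonormal, I would deduce first that $v^{*}_{ij}=0$ whenever $r(e_{i})\neq r(e_{j})$ (condition (1)), and then that $u_{vw}=0$ for $v\neq w$, i.e. $u=\bigoplus_{v}u_{v}$ with each $u_{v}$ unitary on $H_{v}$. Granting this, conditions (2) and (3) of the statement are precisely the blockwise transcription of $v^{*}(v^{*})^{*}=I$ on $\bigoplus_{e}H_{s(e)}$, i.e. of the unitarity of $v^{*}$, once (1) is in place. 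Finally I would check sufficiency: if $u=\bigoplus_{v}u_{v}$ and $v^{*}$ satisfies (1)--(3), then for $\eta^{*}\in(E^{\sigma})^{*}$ one has $(u\eta^{*}v^{*})_{v,j}=u_{v}\sum_{i:\,r(e_{i})=v}T_{e_{i}}v^{*}_{ij}$, which by (1) vanishes unless $v=r(e_{j})$; so $u\eta^{*}v^{*}\in(E^{\sigma})^{*}$, and the map is a surjective isometry since $u$ and $v$ are unitary and $u^{*},v$ satisfy the analogous conditions.

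The step I expect to be the main obstacle is the block-diagonality of $u$. Taken alone, the probing identity above only shows that $u$ carries the fibre $H_{r(e_{i_{0}})}$ into a single fibre $H_{\rho(i_{0})}$, where $\rho$ is governed by the support pattern of the relevant row of $v^{*}$; excluding a nontrivial permutation $\rho$ of the vertex fibres requires using the absence of sources in tandem with the unitarity of \emph{both} $u$ and $v^{*}$, so that the support patterns of $u$ and of $v^{*}$ constrain one another. Managing this coupling --- keeping track of which blocks $v^{*}_{ij}$ may be nonzero, given that $v^{*}$ is unitary and that $u,v^{*}$ together must map $(E^{\sigma})^{*}$ onto itself --- is where I expect the real work to lie; the remaining steps are routine block-matrix manipulations.
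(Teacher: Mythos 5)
Your setup---coordinatizing $H$ and $E\overbar{\otimes}_{\sigma}H$ by vertices and edges, identifying $(E^{\sigma})^{*}$ with the block matrices supported on the pattern $\{(r(e),e)\}$, and probing $\omega$ with elementary intertwiners---is exactly the strategy of the paper's proof, and the dichotomy you extract (for $v\neq r(e_{j})$, either $u_{v,r(e_{i_{0}})}=0$ or $v^{*}_{i_{0},j}=0$) is correct. The problem is that you stop at precisely the point where the real proof would have to happen: you defer the block-diagonality of $u$, saying only that excluding a nontrivial permutation $\rho$ of the vertex fibres ``requires using the absence of sources in tandem with the unitarity of both $u$ and $v^{*}$,'' without producing that argument. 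This is a genuine gap, not a routine verification. Your dichotomy, combined with unitarity of $u$ and $v^{*}$ and the absence of sources, only yields that $u$ is a block \emph{permutation} matrix: each vertex column of $u$ is supported in a single row $\rho(w)$ with $\rho$ a bijection of $G^{0}$, and each row of $v^{*}$ indexed by $e_{i}$ is supported on the edges $e_{j}$ with $r(e_{j})=\rho(r(e_{i}))$. Nothing in that data forces $\rho=\mathrm{id}$.

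Indeed $\rho$ can be nontrivial, so the deferred step cannot be completed as stated. Take $G^{0}=\{a,b\}$ with one loop at each vertex and $\dim H_{a}=\dim H_{b}$, let $W\colon H_{b}\to H_{a}$ be unitary, and put $u=v^{*}=\left(\begin{smallmatrix}0&W\\W^{*}&0\end{smallmatrix}\right)$ (the vertex and edge decompositions coincide here). Then $(E^{\sigma})^{*}=\{\mathrm{diag}(T_{a},T_{b})\}$ and $u\,\mathrm{diag}(T_{a},T_{b})\,v^{*}=\mathrm{diag}(WT_{b}W^{*},\,W^{*}T_{a}W)$, a surjective linear isometry of $(E^{\sigma})^{*}$ with $u$ not block diagonal; moreover this $\omega$ admits no alternative representation with diagonal $u$, since its $(a,a)$ block depends on $T_{b}$. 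So the conclusion you are after needs either an additional hypothesis ruling out such fibre-compatible graph symmetries, or must be weakened to ``$u$ is a block permutation matrix over a bijection of $G^{0}$.'' For what it is worth, the paper's own treatment of this step is no better: it derives a contradiction from the support of $u\eta^{*}$, tacitly assuming that $u\eta^{*}$ by itself must be an intertwiner, whereas the hypothesis only constrains $u\eta^{*}v^{*}$---and in the example above $u\eta^{*}$ is not an intertwiner while $u\eta^{*}v^{*}$ is. Once diagonality of $u$ is granted or imposed, your derivation of (1) from the support condition and of (2)--(3) from $v^{*}(v^{*})^{*}=I$ agrees with the paper and is fine.
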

\begin{proof}
Suppose $u$ has a nonzero off diagonal block $u_{ij}$. So $i\neq j$. Since we are assuming $G$ does not have any sources, the vertex $v_{j}$ is the range of some edge $e_{k}$. That is, $\eta^{*}$ has a nonzero block $\eta(e_{k}^{-1})^{*}$ on row $j$ and column $k$. Then $u\eta^{*}$ has the nonzero block $u_{ij}\eta(e_{k}^{-1})^{*}$ on row $i$ and column $k$, which is a contradiction, since by \cite[Theorem 4.1]{Ardila2019}, the only nonzero block on column $k$ of $\eta^{*}$ is on row $j\neq i$. Thus $u_{ij}=0$ for $i\neq j$. Now denote each diagonal block $u_{ii}$ by $u_{i}$. So $u=\bigoplus \limits^{|G^{0}|}_{i=1}u_{i}$. In the product $u\eta^{*}$, $u_{i}$ multiplies all blocks on row $i$ of $\eta^{*}$. Since all blocks on row $i$ of $\eta^{*}$ have range in $H_{i}$ and $u\eta^{*}\in (E^{\sigma})^*$, we must have $u_{i}:H_{i}\to H_{i}$. Since $u=\bigoplus \limits^{|G^{0}|}_{i=1}u_{i}$ is unitary, each $u_{i}$ is unitary.

Now let $v^{*}= (v_{ij}^{*})$. Note that $(v_{ij}^{*})$ is a $|G^{1}|$ by $|G^{1}|$ block matrix where $v_{ij}^{*}\in B(H_{s(e_{j})},H_{s(e_{j})})$. Let $r(e_{m})=v_{n}$. Then for $p\neq n$, the product of row $p$ of $\eta^{*}$ and column $m$ of $(v_{ij}^{*})$ equals $0$, by \cite[Theorem 4.1]{Ardila2019}. Since $\eta^{*}$ is arbitrary in $(E^{\sigma})^*$ and the entries in column $m$ of $(v_{ij}^{*})$ which multiply a nonzero entry in row $p$ of $\eta^{*}$, are the entries $v_{ij}^{*}$ such that $r(e_{i})=r(e_{p})\neq r(e_{j})$, we have that $v_{ij}^{*}=0$ if $r(e_{i})\neq r(e_{j})$. Furthermore, since $(v_{ij}^{*})(v_{ji})=I_{E\overbar{\otimes}_{A}H}$, we have that for each $i \in \{1,2,\cdots|G^{1}| \}$, $\sum \limits^{|G^{1}|}_{j=1}v_{ij}^{*}v_{ij}=I_{H_{s(e_{i})}}$, and since the non-diagonal entries of $(v_{ij}^{*})(v_{ji})=I_{E\overbar{\otimes}_{A}H}$ are $0$, we have that if $i\neq k$ then $\sum \limits^{|G^{1}|}_{j=1}v_{ij}^{*}v_{kj}=0$.
\end{proof}
Let $J=\{u\in B(H), v\in  B(E\overbar{\otimes}_{A}H) \enspace  | \enspace  u\eta^{*}v^{*} \in (E^{\sigma})^{*} \}$.

Let $\mathcal{M}=\{
\left( 
\begin{matrix} 
\Delta_{\gamma}^{-1}u^{*} & \gamma^{*} \Delta_{\gamma ^{*}}^{-1}v^{*} \\
-\gamma \Delta_{\gamma}^{-1}u^{*} & -\Delta_{\gamma ^{*}}^{-1}v^{*}
\end{matrix}
\right)\enspace | \enspace \gamma ^{*}\in \mathbb{D}((E^{\sigma})^{*}),u,v \in J \}$.

Let 
$\left( 
\begin{matrix} 
\Delta_{\gamma}^{-1}u_{1}^{*} & \gamma^{*} \Delta_{\gamma ^{*}}^{-1}v_{1}^{*} \\
-\gamma \Delta_{\gamma}^{-1}u_{1}^{*} & -\Delta_{\gamma ^{*}}^{-1}v_{1}^{*}
\end{matrix}
\right)\sim
\left( 
\begin{matrix} 
\Delta_{\gamma}^{-1}u_{2}^{*} & \gamma^{*} \Delta_{\gamma ^{*}}^{-1}v_{2}^{*} \\
-\gamma \Delta_{\gamma}^{-1}u_{2}^{*} & -\Delta_{\gamma ^{*}}^{-1}v_{2}^{*}
\end{matrix}
\right)$ if $u_{1}\eta^{*}v_{1}^{*}=u_{2}\eta^{*}v_{2}^{*}$ for all $\eta^{*}\in \mathbb{D}({E^{\sigma}}^*)$. That is, two matrices in $\mathcal{M}$ are equivalent if the unitaries in each matrix determine the same isometry $\omega$ of $\mathbb{D}({E^{\sigma}}^*)$. 
Let $\mathcal{M}^{op}$ denote the opposite group of $\mathcal{M} $. That is, $\mathcal{M}^{op}$ has the same elements as $\mathcal{M}$ and the group operation $*$ is defined by reversing the matrix multiplication order. So for any $T,S\in \mathcal{M}^{op}$, we have $T*S=ST$. As groups, $\mathcal{M}^{op} \cong \mathcal{M}$, with isomorphism given by $\pi(T)=T^{*}$. Furthermore, $\mathcal{M}^{op}/\sim$ is a group with the operation $[T]*[S]=[ST]$ and identity element
 $[\left( 
\begin{matrix} 
I_{H} &  \\
0 & I_{E\overbar{\otimes}_{A}H}
\end{matrix}
\right)]$.

\begin{theorem}
Let $g=\omega\circ  g_{\gamma}\in Aut(\mathbb{D}({E^{\sigma}}^*))$ where $\omega$ is determined by unitaries $u$ and $v$. The map $\Psi: Aut(\mathbb{D}({E^{\sigma}}^*))\to \mathcal{M}^{op}/\sim$ defined by
$\Psi(g)=[
\left( 
\begin{matrix} 
\Delta_{\gamma}^{-1}u^{*} & \gamma^{*} \Delta_{\gamma ^{*}}^{-1}v^{*} \\
-\gamma \Delta_{\gamma}^{-1}u^{*} & -\Delta_{\gamma ^{*}}^{-1}v^{*}
\end{matrix}
\right)]$ is a group isomorphism. 
\end{theorem}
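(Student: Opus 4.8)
The plan is to verify, in order, that $\Psi$ is well defined, bijective, and multiplicative, with Lemma~\ref{uniquedecomposition} doing most of the work. For \emph{well-definedness}: by Lemma~\ref{uniquedecomposition} every $g \in Aut(\mathbb{D}({E^{\sigma}}^*))$ has a unique decomposition $g = \omega \circ g_\gamma$ with $\gamma = g^{-1}(0)^*$, so $\gamma$ and $\omega$ are determined by $g$; the only remaining freedom is the choice of unitaries $u,v$ with $\omega(\eta^*) = u\eta^*v^*$, and two such choices produce matrices that are $\sim$-equivalent by the very definition of $\sim$ on $\mathcal{M}$. Hence $\Psi(g) \in \mathcal{M}^{op}/\!\sim$ is well defined, and it lies in $\mathcal{M}$ because $\gamma^* \in \mathbb{D}({E^{\sigma}}^*)$ and $(u,v) \in J$. \emph{Injectivity} is then immediate: $\Psi(g) = \Psi(g')$ forces $\gamma = \gamma'$ and $\omega = \omega'$, so $g = \omega\circ g_\gamma = \omega'\circ g_{\gamma'} = g'$. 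For \emph{surjectivity}, pick a representative of a given class, with parameters $(\gamma,u,v)$, set $\omega(\eta^*) = u\eta^*v^*$, and put $g = \omega\circ g_\gamma$; this is a composite of the biholomorphic automorphism $g_\gamma$ of $\mathbb{D}({E^{\sigma}}^*)$ with the surjective linear isometry $\omega$ of the $J^*$-algebra $(E^{\sigma})^*$, hence $g \in Aut(\mathbb{D}({E^{\sigma}}^*))$, and since $g(\gamma^*) = \omega(g_\gamma(\gamma^*)) = \omega(0) = 0$ we get $g^{-1}(0) = \gamma^*$, so $\omega\circ g_\gamma$ is exactly the canonical decomposition of $g$ and $\Psi(g)$ is the chosen class.

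The heart of the argument is \emph{multiplicativity}, and the crucial step there is the identity $[I_H \quad \eta^*]\,T_g = [I_H \quad g(\eta^*)]$, valid for every $g = \omega\circ g_\gamma \in Aut(\mathbb{D}({E^{\sigma}}^*))$, every representative $T_g$ of $\Psi(g)$, and every $\eta^* \in \mathbb{D}({E^{\sigma}}^*)$. This is the computation carried out in the Theorem preceding this one, except that $\omega$ is now an arbitrary isometry $\eta^* \mapsto u\eta^*v^*$ rather than one of the special form $\eta^* \mapsto \eta^*(u\otimes I_H)$: expanding the matrix product gives $[(I_H - \eta^*\gamma)\Delta_\gamma^{-1}u^* \quad (\gamma^* - \eta^*)\Delta_{\gamma^*}^{-1}v^*]$, and left multiplication by $C = u\,\Delta_\gamma(I_H - \eta^*\gamma)^{-1}$ renormalizes the first block to $I_H$ and the second to $u\,g_\gamma(\eta^*)v^* = g(\eta^*)$. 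The one extra verification is that $C$ is an invertible element of $\sigma(A)'$, so that this renormalization is legal in $\mathbb{P}$: $\Delta_\gamma$ and $(I_H - \eta^*\gamma)^{-1}$ lie in $\sigma(A)'$ (shown in the preceding proof), and $u \in \sigma(A)'$ is part of the description of $J$ --- for the graph correspondence this is Theorem~\ref{isometries}, and in general it follows from $\omega$ being a surjective isometry of $(E^{\sigma})^*$. As in the preceding discussion, the identity is independent of the chosen representative of $[I_H \quad \eta^*]$.

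Granting this identity, multiplicativity follows quickly. For $g_1, g_2 \in Aut(\mathbb{D}({E^{\sigma}}^*))$, applying it twice gives, for all $\eta^* \in \mathbb{D}({E^{\sigma}}^*)$,
\[
[I_H \quad \eta^*]\,(T_{g_2}T_{g_1}) \;=\; [I_H \quad g_2(\eta^*)]\,T_{g_1} \;=\; [I_H \quad g_1(g_2(\eta^*))] \;=\; [I_H \quad \eta^*]\,T_{g_1\circ g_2}.
\]
Both $T_{g_2}T_{g_1}$ and $T_{g_1\circ g_2}$ lie in $\mathcal{M}$ --- closure of $\mathcal{M}$ under products is exactly what makes the operation $[T]*[S] = [ST]$ on $\mathcal{M}^{op}/\!\sim$ well defined --- and a matrix in $\mathcal{M}$ with parameters $(\delta,w,z)$ induces on $\{[I_H \quad \eta^*]\}$ precisely the map $\eta^* \mapsto w\,g_\delta(\eta^*)z^*$. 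Hence two matrices of $\mathcal{M}$ inducing the same action give rise to the same automorphism $\omega\circ g_\delta = \omega'\circ g_{\delta'}$ of $\mathbb{D}({E^{\sigma}}^*)$, and Lemma~\ref{uniquedecomposition} then forces $\delta = \delta'$ and $\omega = \omega'$, i.e.\ the two matrices are $\sim$-equivalent. Therefore $[T_{g_2}T_{g_1}] = [T_{g_1\circ g_2}]$, which says $\Psi(g_1)*\Psi(g_2) = \Psi(g_1\circ g_2)$; combined with bijectivity, $\Psi$ is a group isomorphism (so in particular $\Psi(id)$ equals the stated identity element).

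I expect the real obstacle to be this last bookkeeping. The matrices $T_g$ are composed by ordinary multiplication, so one has to know both that such products remain of the special form defining $\mathcal{M}$ and that the $\sim$-class of a matrix in $\mathcal{M}$ can be read off from its action on $\mathbb{P}$; the second fact is where Lemma~\ref{uniquedecomposition} is genuinely needed, turning ``same action on $\mathbb{P}$'' into ``same $(\gamma,\omega)$''. A lesser technical nuisance, already flagged, is that renormalizing inside $\mathbb{P}$ demands $u \in \sigma(A)'$, which for a general isometry $\omega(\eta^*) = u\eta^*v^*$ (as opposed to $\eta^* \mapsto \eta^*(u\otimes I_H)$) is precisely where the structure of $J$ --- or Theorem~\ref{isometries} in the graph case --- must be invoked.
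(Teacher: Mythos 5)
Your proof is correct and follows essentially the same route as the paper: represent $g_\gamma$ and $\omega$ by block matrices acting on $[I_H\ \ \eta^*]\in\mathbb{P}$, verify $[I_H\ \ \eta^*]T_g=[I_H\ \ g(\eta^*)]$, read off multiplicativity from composing these actions, and get injectivity from Lemma~\ref{uniquedecomposition}. If anything you are more careful than the paper on two points it leaves implicit --- that products of matrices in $\mathcal{M}$ act as the composed automorphism and hence are $\sim$-equivalent to the canonical representative, and that the renormalizing factor (in particular $u$) must lie in $\sigma(A)'$ for the reduction in $\mathbb{P}$ to be legitimate.
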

\begin{proof}
First we show that the matrices representating $g_{\gamma}$ and $\omega$ are
$\left( 
\begin{matrix} 
\Delta_{\gamma}^{-1} & \gamma^{*} \Delta_{\gamma ^{*}}^{-1} \\
-\gamma \Delta_{\gamma}^{-1} & -\Delta_{\gamma ^{*}}^{-1}
\end{matrix}
\right)$ and
$\left( 
\begin{matrix} 
u^{*} & 0\\
0 & v^{*}
\end{matrix}
\right)$ respectively:
\begin{equation*}
\begin{split}
[I_{H} \quad \eta ^{*}]
\left( 
\begin{matrix} 
\Delta_{\gamma}^{-1} & \gamma^{*} \Delta_{\gamma ^{*}}^{-1} \\
-\gamma \Delta_{\gamma}^{-1} & -\Delta_{\gamma ^{*}}^{-1}
\end{matrix}
\right)&=[\Delta_{\gamma}^{-1}-\eta ^{*}\gamma \Delta_{\gamma}^{-1} \qquad \gamma^{*} \Delta_{\gamma ^{*}}^{-1}-\eta ^{*}\Delta_{\gamma ^{*}}^{-1}] \\
&= [I_{H} \qquad (\Delta_{\gamma}^{-1}-\eta ^{*}\gamma \Delta_{\gamma}^{-1})^{-1} (\gamma^{*} \Delta_{\gamma ^{*}}^{-1}-\eta ^{*}\Delta_{\gamma ^{*}}^{-1})]\\
&= [I_{H} \qquad ((I_{H}-\eta ^{*}\gamma) \Delta_{\gamma}^{-1})^{-1} (\gamma^{*}-\eta ^{*})\Delta_{\gamma ^{*}}^{-1}]\\
&= [I_{H} \qquad \Delta_{\gamma}(I_{H}-\eta ^{*}\gamma)^{-1} (\gamma^{*}-\eta ^{*})\Delta_{\gamma ^{*}}^{-1}]\\
&= [I_{H} \qquad g_{\gamma}(\eta ^{*})]
\end{split}
\end{equation*} and $[I_{H} \quad \eta ^{*}]
\left( 
\begin{matrix} 
u^{*} & 0\\
0 & v^{*}
\end{matrix}
\right)
=[u^{*} \quad \eta ^{*}v^{*}]=[I_{H} \quad u\eta ^{*}v^{*}]= [I_{H} \quad \omega(\eta ^{*})]$.

\noindent
Then $g=\omega\circ g_{\gamma} $ is represented by $
\left( 
\begin{matrix} 
\Delta_{\gamma}^{-1} & \gamma^{*} \Delta_{\gamma ^{*}}^{-1} \\
-\gamma \Delta_{\gamma}^{-1} & -\Delta_{\gamma ^{*}}^{-1}
\end{matrix}
\right)
\left( 
\begin{matrix} 
u^{*} & 0\\
0 & v^{*}
\end{matrix}
\right)= 
\left( 
\begin{matrix} 
\Delta_{\gamma}^{-1}u^{*} & \gamma^{*} \Delta_{\gamma ^{*}}^{-1}v^{*} \\
-\gamma \Delta_{\gamma}^{-1}u^{*} & -\Delta_{\gamma ^{*}}^{-1}v^{*}
\end{matrix}
\right)$:
\begin{equation*}
\begin{split}
[I_{H} \quad \eta ^{*}]
\left( 
\begin{matrix} 
\Delta_{\gamma}^{-1}u^{*} & \gamma^{*} \Delta_{\gamma ^{*}}^{-1}v^{*} \\
-\gamma \Delta_{\gamma}^{-1}u^{*} & -\Delta_{\gamma ^{*}}^{-1}v^{*}
\end{matrix}
\right)
&=[(\Delta_{\gamma}^{-1}-\eta ^{*}\gamma \Delta_{\gamma}^{-1})u^{*} \qquad (\gamma^{*} \Delta_{\gamma ^{*}}^{-1}-\eta ^{*}\Delta_{\gamma ^{*}}^{-1})v^{*}] \\
&= [I_{H} \qquad ((\Delta_{\gamma}^{-1}-\eta ^{*}\gamma \Delta_{\gamma}^{-1})u^{*})^{-1} (\gamma^{*} \Delta_{\gamma ^{*}}^{-1}-\eta ^{*}\Delta_{\gamma ^{*}}^{-1})v^{*}]\\
&= [I_{H} \qquad u((I_{H}-\eta ^{*}\gamma) \Delta_{\gamma}^{-1})^{-1} (\gamma^{*}-\eta ^{*})\Delta_{\gamma ^{*}}^{-1}v^{*}]\\
&= [I_{H} \qquad u\Delta_{\gamma}(I_{H}-\eta ^{*}\gamma)^{-1} (\gamma^{*}-\eta ^{*})\Delta_{\gamma ^{*}}^{-1}v^{*}]\\
&= [I_{H} \qquad ug_{\gamma}(\eta ^{*})v^{*}]=[I_{H} \qquad \omega(g_{\gamma}(\eta ^{*}))]\\
&= [I_{H} \qquad g(\eta ^{*})]
\end{split}
\end{equation*}
$\Psi$ is clearly onto. 
If $g$ and $f$ in $Aut(\mathbb{D}({E^{\sigma}}^*))$ are represented by matrices  $T_{g}$ and $T_{f}$ respectively, 
then we have $[I_{H} \quad \eta ^{*}]T_{f}T_{g}=[I_{H} \quad f(\eta ^{*})]T_{g}=[I_{H} \quad g(f(\eta ^{*}))]$. So $\Psi(g\circ f)=[T_{g\circ f}]=[T_{f}T_{g}]=[T_{g}]*[T_{f}]=\Psi(g)* \Psi(f)$. 
 Since by \cref{uniquedecomposition}, each $g\in Aut(\mathbb{D}({E^{\sigma}}^*))$ has a unique decomposition $g=\omega \circ g_{g^{-1}(o)^{*}}$, we have that $\Psi: Aut(\mathbb{D}({E^{\sigma}}^*))\to \mathcal{M}^{op}/\sim$ is injective, thus a group isomorphism. Note that since $\Psi(g^{-1})=\Psi(g_{\gamma}^{-1})*\Psi(\omega^{-1})=[T_{\omega}^{-1}T_{g_{\gamma}}]$, the inverse of each $[
\left( 
\begin{matrix} 
\Delta_{\gamma}^{-1}u^{*} & \gamma^{*} \Delta_{\gamma ^{*}}^{-1}v^{*} \\
-\gamma \Delta_{\gamma}^{-1}u^{*} & -\Delta_{\gamma ^{*}}^{-1}v^{*}
\end{matrix}
\right)]$ in $\mathcal{M}^{op}/\sim$ is
$[
\left( 
\begin{matrix} 
u\Delta_{\gamma}^{-1} & u\gamma^{*} \Delta_{\gamma ^{*}}^{-1}\\
-v\gamma \Delta_{\gamma}^{-1} & -v\Delta_{\gamma ^{*}}^{-1}
\end{matrix}
\right)]$.
\end{proof}


Let $K$ be a Hilbert space. A linear invertible operator $S$ is said to be $\kappa$-pseudo-unitary if there exists a linear invertible, Hermitian operator $\kappa :K\to K$ such that $S$ satisfies $S^{*}=\kappa S^{-1}\kappa^{-1}$. As in \cite[2]{Mostafazadeh2004}, we denote the group of all $\kappa$-pseudo-unitary operators by $\mathcal{U}_\kappa(K)$.

\begin{theorem}
$Aut(\mathbb{D}({E^{\sigma}}^*))$ and $Aut(H^{\infty}(E))$ are subgroups of the pseudo-unitary group $\mathcal{U}_\kappa(H\bigoplus E \overbar{\otimes}H)$ for $\kappa=\left( 
\begin{matrix} 
I_{H} & 0 \\
0 & -I_{E\overbar{\otimes}_{A}H}
\end{matrix}
\right)$.
\end{theorem}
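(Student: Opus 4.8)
The plan is to push everything through the matrix isomorphism $\Psi\colon Aut(\mathbb{D}({E^{\sigma}}^*))\xrightarrow{\,\cong\,}\mathcal{M}^{op}/\!\sim$ from the preceding theorem. Since $Aut(H^{\infty}(E))\leq Aut(\mathbb{D}({E^{\sigma}}^*))$, it suffices to treat the larger group, so the statement reduces to showing that every matrix
\[
T=\left(\begin{matrix}\Delta_{\gamma}^{-1}u^{*} & \gamma^{*}\Delta_{\gamma^{*}}^{-1}v^{*}\\ -\gamma\Delta_{\gamma}^{-1}u^{*} & -\Delta_{\gamma^{*}}^{-1}v^{*}\end{matrix}\right)\in\mathcal{M}
\]
is $\kappa$-pseudo-unitary. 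Here $\kappa=\left(\begin{matrix}I_{H} & 0\\ 0 & -I_{E\overbar{\otimes}H}\end{matrix}\right)$ is Hermitian with $\kappa^{2}=I$, which gives two simplifications: an invertible operator $S$ on $H\oplus E\overbar{\otimes}H$ lies in $\mathcal{U}_\kappa$ precisely when $S^{*}\kappa S=\kappa$, and $\mathcal{U}_\kappa$ is then closed under adjoints. Consequently the isomorphism $\pi(S)=S^{*}$ of the preceding theorem sends $\mathcal{M}^{op}$ into $\mathcal{U}_\kappa$ as soon as $\mathcal{M}$ does — interchanging the reversed and the ordinary multiplication — and the condition $S^{*}\kappa S=\kappa$ is insensitive to the choice of representative of an $\sim$-class. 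So it is enough to verify $T^{*}\kappa T=\kappa$ for the $T$ displayed above; then $\pi\circ\Psi$ realizes $Aut(\mathbb{D}({E^{\sigma}}^*))$, and a fortiori $Aut(H^{\infty}(E))$, as a subgroup of $\mathcal{U}_\kappa(H\oplus E\overbar{\otimes}H)$.

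The core computation is brief. Writing $T^{*}=\left(\begin{matrix}u\Delta_{\gamma}^{-1} & -u\Delta_{\gamma}^{-1}\gamma^{*}\\ v\Delta_{\gamma^{*}}^{-1}\gamma & -v\Delta_{\gamma^{*}}^{-1}\end{matrix}\right)$ and $\kappa T=\left(\begin{matrix}\Delta_{\gamma}^{-1}u^{*} & \gamma^{*}\Delta_{\gamma^{*}}^{-1}v^{*}\\ \gamma\Delta_{\gamma}^{-1}u^{*} & \Delta_{\gamma^{*}}^{-1}v^{*}\end{matrix}\right)$, one multiplies $T^{*}(\kappa T)$ block by block. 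The two off-diagonal entries cancel term for term, while the diagonal entries collapse once one uses that $\Delta_{\gamma}^{-1}=(I_{H}-\gamma^{*}\gamma)^{-1/2}$ is a norm-convergent power series in $\gamma^{*}\gamma$ — legitimate because $\|\gamma\|<1$ and $\gamma^{*}\gamma\in\sigma(A)'$, exactly as justified earlier in this section — so that $\Delta_{\gamma}^{-1}$ commutes with $\gamma^{*}\gamma$ and $\Delta_{\gamma}^{2}=I_{H}-\gamma^{*}\gamma$, and symmetrically for $\Delta_{\gamma^{*}}^{-1}$ and $\gamma\gamma^{*}$. One obtains $(T^{*}\kappa T)_{11}=u(I_{H}-\gamma^{*}\gamma)\Delta_{\gamma}^{-2}u^{*}=uu^{*}=I_{H}$ and $(T^{*}\kappa T)_{22}=v(\gamma\gamma^{*}-I_{E\overbar{\otimes}H})\Delta_{\gamma^{*}}^{-2}v^{*}=-vv^{*}=-I_{E\overbar{\otimes}H}$, hence $T^{*}\kappa T=\kappa$. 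Note this used only the unitarity of $u,v$ and $\gamma^{*}\in\mathbb{D}((E^{\sigma})^{*})$, not the side conditions defining $J$; the matrices in $\mathcal{M}$ are $\kappa$-pseudo-unitary whether or not the associated $\omega$ happens to preserve $(E^{\sigma})^{*}$.

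Finally one records the invertibility of $T$, which is part of the definition of $\mathcal{U}_\kappa$: the companion identity $T\kappa T^{*}=\kappa$, obtained by a parallel block multiplication (now using $\gamma\Delta_{\gamma}^{-2}=\Delta_{\gamma^{*}}^{-2}\gamma$ and $\gamma^{*}\Delta_{\gamma^{*}}^{-2}=\Delta_{\gamma}^{-2}\gamma^{*}$, again immediate from the power series), yields $T^{-1}=\kappa T^{*}\kappa$ — which, up to $\sim$, is the inverse already written down in the preceding theorem. The single nontrivial ingredient is thus the identity $T^{*}\kappa T=\kappa$, a two-line block computation; the points that actually require care are the bookkeeping ones — the opposite-group convention, the equivalence relation $\sim$ (so that the assertion must be read via the embedding $\pi\circ\Psi$ rather than as a literal set-theoretic inclusion), keeping the domains $H$ and $E\overbar{\otimes}H$ straight, and the routine functional-calculus justification that lets $\Delta_{\gamma}^{-1}$ pass through $\gamma^{*}\gamma$.
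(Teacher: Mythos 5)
Your proposal is correct and follows essentially the same route as the paper: reduce to the matrices in $\mathcal{M}$ via the representation from the preceding theorem and verify pseudo-unitarity by a block computation resting on the norm-convergent power series for $\Delta_{\gamma}^{-1}$ and $\Delta_{\gamma^{*}}^{-1}$ and the resulting commutation relations with $\gamma^{*}\gamma$ and $\gamma\gamma^{*}$. The only (cosmetic, since $\kappa^{2}=I$ makes the two forms equivalent) difference is that you check $T^{*}\kappa T=\kappa$ where the paper checks $T\kappa T^{*}=\kappa$; your extra care with the opposite-group convention and the $\sim$-classes is a welcome tightening of a point the paper glosses over.
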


\begin{proof}
If $\kappa=\left( 
\begin{matrix} 
I_{H} & 0 \\
0 & -I_{E\overbar{\otimes}_{A}H}
\end{matrix}
\right)$, then the condition $S^{*}=\kappa S^{-1}\kappa^{-1}$ in the definition of $\kappa$-pseudo-unitary operator is equivalent to $S\kappa S^{*}=\kappa$. We show that the matrices representing elements of $Aut(\mathbb{D}({E^{\sigma}}^*))$ and $Aut(H^{\infty}(E))$ satisfy this condition.

If $\gamma ^{*} \in \mathbb{D}({E^{\sigma}}^*))$, then the series $\Sigma_{n=0}^{\infty}(\gamma^{*}\gamma)^{n}$ converges in norm to the operator $(I_{H}-\gamma^{*}\gamma)^{-1}$. So $\Delta_{\gamma}^{-2}=(I_{H}-\gamma^{*}\gamma)^{-1}=\Sigma_{n=0}^{\infty}(\gamma^{*}\gamma)^{n}$. Likewise, $\Delta_{\gamma^{*}}^{-2}=(I_{E\overbar{\otimes}_{A}H}-\gamma\gamma^{*})^{-1}=\Sigma_{n=0}^{\infty}(\gamma\gamma^{*})^{n}$. Note that $\gamma^{*}\Delta_{\gamma ^{*}}^{-2}\gamma=\gamma^{*}\Sigma_{n=0}^{\infty}(\gamma\gamma^{*})^{n}\gamma=\Sigma_{n=1}^{\infty}(\gamma^{*}\gamma)^{n}$. Thus
\begin{equation}\label{one}
 \Delta_{\gamma}^{-2}-\gamma^{*}\Delta_{\gamma ^{*}}^{-2}\gamma=\Sigma_{n=0}^{\infty}(\gamma^{*}\gamma)^{n}-\Sigma_{n=1}^{\infty}(\gamma^{*}\gamma)^{n}=I_{H}
\end{equation}
Note that $\gamma \Delta_{\gamma}^{-2}\gamma^{*}=\gamma\Sigma_{n=0}^{\infty}(\gamma^{*}\gamma)^{n}\gamma^{*}=\Sigma_{n=1}^{\infty}(\gamma\gamma^{*})^{n}$. Thus
\begin{equation}\label{two}
\gamma\Delta_{\gamma }^{-2}\gamma^{*}-\Delta_{\gamma^{*}}^{-2}=\Sigma_{n=1}^{\infty}(\gamma\gamma^{*})^{n}-\Sigma_{n=0}^{\infty}(\gamma\gamma^{*})^{n}=-I_{E\overbar{\otimes}_{A}H}
\end{equation}
Note that $\Delta_{\gamma}^{-2}\gamma^{*}=\Sigma_{n=0}^{\infty}(\gamma^{*}\gamma)^{n}\gamma^{*}=\gamma^{*}\Sigma_{n=0}^{\infty}(\gamma\gamma^{*})^{n}$.Thus
\begin{equation}\label{three}
-\Delta_{\gamma}^{-2}\gamma^{*} +\gamma^{*}\Delta_{\gamma^{*}}^{-2}=-\gamma^{*}\Sigma_{n=0}^{\infty}(\gamma\gamma^{*})^{n}+\gamma^{*}\Sigma_{n=0}^{\infty}(\gamma\gamma^{*})^{n}=0
\end{equation}
Note that $\gamma\Delta_{\gamma}^{-2}=\gamma\Sigma_{n=0}^{\infty}(\gamma^{*}\gamma)^{n}=\Sigma_{n=0}^{\infty}(\gamma\gamma^{*})^{n}\gamma$. Thus
\begin{equation}\label{four}
-\gamma \Delta_{\gamma}^{-2}+\Delta_{\gamma ^{*}}^{-2}\gamma=-\Sigma_{n=0}^{\infty}(\gamma\gamma^{*})^{n}\gamma+\Sigma_{n=0}^{\infty}(\gamma\gamma^{*})^{n}\gamma=0
\end{equation}

Then we have
\begin{equation*}
\begin{split}
\left( 
\begin{matrix} 
\Delta_{\gamma}^{-1}u^{*} & \gamma^{*} \Delta_{\gamma ^{*}}^{-1}v^{*} \\
-\gamma \Delta_{\gamma}^{-1}u^{*} & -\Delta_{\gamma ^{*}}^{-1}v^{*}
\end{matrix}
\right)&
\left( 
\begin{matrix} 
I_{H} & 0 \\
0 & -I_{E\overbar{\otimes}_{A}H}
\end{matrix}
\right)
\left( 
\begin{matrix} 
(\Delta_{\gamma}^{-1}u^{*})^{*} & (-\gamma \Delta_{\gamma}^{-1}u^{*})^{*} \\
(\gamma^{*} \Delta_{\gamma ^{*}}^{-1}v^{*})^{*} & (-\Delta_{\gamma ^{*}}^{-1}v^{*})^{*}
\end{matrix}
\right)\\
&=
\left( 
\begin{matrix} 
\Delta_{\gamma}^{-1}u^{*} & -\gamma^{*} \Delta_{\gamma ^{*}}^{-1}v^{*} \\
-\gamma \Delta_{\gamma}^{-1}u^{*} & \Delta_{\gamma ^{*}}^{-1}v^{*}
\end{matrix}
\right)
\left( 
\begin{matrix} 
u\Delta_{\gamma}^{-1} & -u\Delta_{\gamma}^{-1}\gamma^{*}  \\
v\Delta_{\gamma ^{*}}^{-1}\gamma & -v\Delta_{\gamma ^{*}}^{-1}
\end{matrix}
\right)\\
&=
\left( 
\begin{matrix} 
\Delta_{\gamma}^{-2}-\gamma^{*}\Delta_{\gamma ^{*}}^{-2}\gamma & -\Delta_{\gamma}^{-2}\gamma^{*} +\gamma^{*}\Delta_{\gamma^{*}}^{-2} \\
-\gamma \Delta_{\gamma}^{-2}+\Delta_{\gamma ^{*}}^{-2}\gamma & \gamma\Delta_{\gamma }^{-2}\gamma^{*}-\Delta_{\gamma^{*}}^{-2}
\end{matrix}
\right)\\
&=
\left( 
\begin{matrix} 
I_{H} & 0 \\
0 & -I_{E\overbar{\otimes}_{A}H}
\end{matrix}
\right)
\end{split}
\end{equation*}
The last equality follows from equations (\ref{one}), (\ref{two}), (\ref{three}) and (\ref{four}).
\end{proof}

Since the elements of $Aut(H^{\infty}(E))$ are determined by the elements in $Aut$ $(\mathbb{D}(E^{\sigma})^*)$ preserving $\mathfrak{Z}({D}(E^{\sigma})^*)$, we can  think of $Aut(H^{\infty}(E))$ as a subgroup of $Aut(\mathbb{D}(E^{\sigma})^*)$. 
Is $Aut(H^{\infty}(E))$ a normal subgroup of $Aut(\mathbb{D}(E^{\sigma})^*)$? We answer this question for the case when $(E,A)$ is a  $W^{*}$-graph correspondence.
\begin{lemma}\label{isometriesnotnormal}
Let $N=\{\omega\in  Aut(\mathbb{D}({E^{\sigma}}^*))$ $|$ $\omega$ is an isometry $ \}$ 
 Then $N\ntrianglelefteq Aut(\mathbb{D}(E^{\sigma})^*)$ 
\end{lemma}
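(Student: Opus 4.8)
The plan is to exhibit a single conjugate of an isometry that fails to be an isometry, hence fails to lie in $N$. The natural candidates for ``destroying'' the isometry property are the M\"obius maps $g_\gamma$, since they move the origin. So I would take an arbitrary nontrivial isometry $\omega\in N$ and a nonzero $\gamma^*\in\mathbb{D}((E^\sigma)^*)$, form $h=g_\gamma\circ\omega\circ g_\gamma\in Aut(\mathbb{D}((E^\sigma)^*))$, and show $h$ is not an isometry. The cleanest way to detect this is via \cref{uniquedecomposition}: an element $g$ of $Aut(\mathbb{D}((E^\sigma)^*))$ is an isometry if and only if $g(0)=0$ (equivalently $g^{-1}(0)=0$, so that its unique decomposition $g=\omega\circ g_{g^{-1}(0)^*}$ has trivial M\"obius part). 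Thus it suffices to pick $\omega$ and $\gamma^*$ so that $g_\gamma(\omega(g_\gamma(0)))\ne 0$, i.e. $g_\gamma(\omega(\gamma^*))\ne 0$, i.e. $\omega(\gamma^*)\ne\gamma^*$.

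So the real content is: for a $W^*$-graph correspondence there exist a nontrivial isometry $\omega$ (given by unitaries $u,v$ as in \cref{isometries}) and a $\gamma^*\in\mathbb{D}((E^\sigma)^*)$ with $\omega(\gamma^*)=u\gamma^*v^*\ne\gamma^*$. First I would rule out the degenerate case: if $N$ itself is trivial the statement is vacuous (or rather, $N\trianglelefteq Aut$ trivially fails only if $N$ is a genuine nontrivial subgroup), so I would note that any graph with at least one edge admits a nontrivial diagonal unitary $u=\bigoplus u_i$ — e.g. scale one block $u_{i}$ by a phase $e^{i\theta}\ne 1$ — paired with a compatible $v$; concretely one can take $v$ so that $\omega(\eta^*)=e^{i\theta}\eta^*$ acts as a nontrivial scalar on at least one nonzero block, which clearly does not fix every $\eta^*$. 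Then picking $\gamma^*$ to be any nonzero element of $\mathbb{D}((E^\sigma)^*)$ supported on that block gives $\omega(\gamma^*)=e^{i\theta}\gamma^*\ne\gamma^*$, hence $h(0)=g_\gamma(e^{i\theta}\gamma^*)\ne g_\gamma(\gamma^*)=0$, so $h\notin N$ by \cref{uniquedecomposition}. Since $h=g_\gamma\omega g_\gamma$ with $g_\gamma=g_\gamma^{-1}\in Aut(\mathbb{D}((E^\sigma)^*))$, this is a conjugate of an element of $N$ lying outside $N$, so $N$ is not normal.

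The main obstacle is the verification that the scaling isometry $\omega$ can be realized honestly as $u\eta^*v^*$ with $u,v$ satisfying the graph-compatibility conditions (1)--(3) of \cref{isometries}, and that the chosen $\gamma^*$ genuinely lies in $\mathbb{D}((E^\sigma)^*)$ rather than just in some formal matrix space; this is where one must use the structure of $(E^\sigma)^*$ for a graph correspondence (via \cite[Theorem 4.1]{Ardila2019}) to know which blocks may be nonzero. A subtlety to address is that when $\omega$ is scalar multiplication by $e^{i\theta}$ on the whole space, $\omega$ commutes with $g_\gamma$ in a way that could make $h=g_\gamma\omega g_\gamma=\omega$ again — so I would instead choose $u$ to scale only \emph{some} of the diagonal blocks (possible as soon as $|G^0|\ge 2$, or with a single-vertex loop algebra by using a non-scalar $v$), ensuring $\omega$ is not central and $\omega(\gamma^*)\ne\gamma^*$ for a suitably supported $\gamma^*$. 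Everything else is bookkeeping with the M\"obius maps and \cref{uniquedecomposition}.
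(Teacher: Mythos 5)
Your core reduction is exactly the paper's: conjugate an isometry by a M\"obius map $g_{\gamma}$ with $\gamma^{*}\neq 0$, evaluate the conjugate at $0$, and use the fact (via \cref{uniquedecomposition}) that an element of $Aut(\mathbb{D}({E^{\sigma}}^*))$ is a linear isometry precisely when it fixes $0$. Where you diverge is in the choice of the isometry being conjugated: you set up an existence problem --- finding a nontrivial block-phase isometry $u\eta^{*}v^{*}$ compatible with the graph conditions of \cref{isometries} --- and you explicitly leave its verification as ``the main obstacle.'' That obstacle is self-inflicted, and as written it is a genuine gap: the existence of your $\omega$ is never actually established, and you have also restricted to graph correspondences even though the lemma is stated for a general $W^{*}$-correspondence. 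Both problems disappear if you simply take $\omega=-\mathrm{id}=g_{0}$, which is a surjective linear isometry of $(E^{\sigma})^{*}$ in complete generality. Then $g_{\gamma}\circ(-\mathrm{id})\circ g_{\gamma}(0)=g_{\gamma}(-\gamma^{*})\neq 0$, because $g_{\gamma}$ is injective and sends $\gamma^{*}$, not $-\gamma^{*}$, to $0$. This is precisely the paper's one-line proof.

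Your stated reason for avoiding scalar isometries --- that multiplication by $e^{i\theta}$ might commute with $g_{\gamma}$ so that the conjugate is again an isometry --- is unfounded: already $g_{\gamma}\bigl(e^{i\theta}g_{\gamma}(0)\bigr)=g_{\gamma}(e^{i\theta}\gamma^{*})\neq g_{\gamma}(\gamma^{*})=0$ whenever $e^{i\theta}\neq 1$ and $\gamma^{*}\neq 0$, exactly as for the classical M\"obius maps of the disc. So the plan is sound and essentially the paper's, but to close it you should replace the unverified block-phase construction by the trivially available isometry $-\mathrm{id}$ (or else actually carry out the construction from \cref{isometries}, which would only prove the lemma for graph correspondences).
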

\begin{proof}
Let $\gamma^{*}\in \mathbb{D}(E^{\sigma})^*$ with $\gamma^{*}\neq 0$.Then $g_{\gamma} \circ -id \circ g_{\gamma} (0)=g_{\gamma}(-\gamma^{*})\neq 0$. So $g_{\gamma}$ does not normalize $N$.
\end{proof}
\begin{theorem}
Let $(E,A)$ be a $W^{*}$-graph correspondence derived from a graph $G$ (and assume that the multiplicity of the representation of $\delta_{v}$ is greater than 1 for at least one vertex). Then $Aut(H^{\infty}(E))\ntrianglelefteq Aut(\mathbb{D}(E^{\sigma})^*)$.
\end{theorem}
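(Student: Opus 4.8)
The plan is to exhibit a single $g\in Aut(\mathbb{D}((E^{\sigma})^{*}))$ and a single $\alpha\in Aut(H^{\infty}(E))$ with $g\circ\alpha\circ g^{-1}\notin Aut(H^{\infty}(E))$; this suffices. For $\alpha$ I take the gauge automorphism $T_{x}\mapsto -T_{x}$, $\varphi_{\infty}(a)\mapsto\varphi_{\infty}(a)$, which lies in $Aut(H^{\infty}(E))$ and is implemented by $g_{0}=-id$ (the data $\gamma=0$, $u=I_{E}$ in \cite[Theorem 4.21(ii)]{Muhly2008b}). For $g$ I take a M\"obius map $g_{\gamma}$ attached to a \emph{non-central} $\gamma$, to be chosen. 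Because $g_{\gamma}^{2}=id$, the conjugate is $h:=g_{\gamma}\circ(-id)\circ g_{\gamma}$, and the task reduces to choosing $\gamma$ so that $h\notin Aut(H^{\infty}(E))$.

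The obstruction I would use is that every element of $Aut(H^{\infty}(E))$ preserves $\mathbb{D}\mathfrak{Z}((E^{\sigma})^{*})$ (\cite[Theorems 4.9 and 4.21]{Muhly2008b}); in particular it sends the central point $0$ into $\mathbb{D}\mathfrak{Z}((E^{\sigma})^{*})$. So it is enough to arrange $h(0)\notin\mathbb{D}\mathfrak{Z}((E^{\sigma})^{*})$. From $g_{\gamma}(0)=\gamma^{*}$ and the formula for the M\"obius map,
\begin{equation*}
h(0)=g_{\gamma}(-\gamma^{*})=\Delta_{\gamma}(I_{H}+\gamma^{*}\gamma)^{-1}(2\gamma^{*})\,\Delta_{\gamma^{*}}^{-1}.
\end{equation*}
For a generic $\gamma$ the factors $\Delta_{\gamma}$ and $\Delta_{\gamma^{*}}^{-1}$ could distort $\gamma^{*}$ awkwardly, so the crux is to pick $\gamma$ making this distortion scalar.

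This is where the multiplicity hypothesis is used. Let $v$ be a vertex whose representation has multiplicity $>1$, and let $e$ be an edge incident to $v$ (an isolated vertex would contribute nothing to $(E^{\sigma})^{*}$, so this is the only case of interest). By the block description of intertwiners over a graph correspondence \cite[Theorem 4.1]{Ardila2019}, the block of $(E^{\sigma})^{*}$ in position $(r(e),e)$ is all of $B(H_{s(e)},H_{r(e)})$, which, since $\dim H_{v}>1$, strictly exceeds the scalar multiples of the identity (indeed it contains no such multiple at all when $\dim H_{s(e)}\neq\dim H_{r(e)}$). Choose in that block a partial isometry $T$ that is not a scalar multiple of the identity, with $T^{*}T=I$ or $TT^{*}=I$, and set $\gamma^{*}:=\lambda T$, zero in every other block, for a fixed $\lambda\in(0,1)$; then $\gamma^{*}\in\mathbb{D}((E^{\sigma})^{*})\setminus\mathfrak{Z}((E^{\sigma})^{*})$. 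Since $TT^{*}T=T$ one has $(\gamma^{*}\gamma)^{n}\gamma^{*}=\lambda^{2n}\gamma^{*}=\gamma^{*}(\gamma\gamma^{*})^{n}$ for all $n$, so the functional-calculus factors $\Delta_{\gamma}(I_{H}+\gamma^{*}\gamma)^{-1}$ and $\Delta_{\gamma^{*}}^{-1}$ act on $\gamma^{*}$ (by left, resp. right, multiplication) as the scalars $\tfrac{\sqrt{1-\lambda^{2}}}{1+\lambda^{2}}$ and $\tfrac{1}{\sqrt{1-\lambda^{2}}}$; hence $h(0)=\tfrac{2}{1+\lambda^{2}}\gamma^{*}$. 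This is a nonzero scalar multiple of the non-central $\gamma^{*}$, so $h(0)\notin\mathbb{D}\mathfrak{Z}((E^{\sigma})^{*})$, whence $h\notin Aut(H^{\infty}(E))$ and $Aut(H^{\infty}(E))\ntrianglelefteq Aut(\mathbb{D}((E^{\sigma})^{*}))$.

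The main obstacle is the third step: finding a non-central $\gamma^{*}$ for which $g_{\gamma}(-\gamma^{*})$ is again non-central. The idea that resolves it is to take $\gamma^{*}$ proportional to a partial isometry, so that $\Delta_{\gamma}$ and $\Delta_{\gamma^{*}}$ collapse to scalars on the single block where $\gamma^{*}$ lives; the graph structure and the assumption $\dim H_{v}>1$ are precisely what guarantee such a non-scalar partial isometry exists inside some intertwiner block.
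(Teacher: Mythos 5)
Your argument is correct, but it runs along a genuinely different track from the paper's. The paper splits into two cases: if $\mathfrak{Z}((E^{\sigma})^{*})=\{0\}$ it identifies $Aut(H^{\infty}(E))$ with the isometry subgroup $N$ and invokes Lemma \ref{isometriesnotnormal}; otherwise it conjugates a M\"obius map $g_{\gamma}$ at a nonzero \emph{central} $\gamma^{*}$ by a linear isometry $\omega$ that fails to preserve the center (whose existence is extracted from the multiplicity hypothesis via Theorem \ref{isometries} and \cite[Corollary 4.2]{Ardila2019}), and observes that the conjugate moves $0$ to $\omega(\gamma^{*})\notin\mathbb{D}\mathfrak{Z}((E^{\sigma})^{*})$. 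You instead conjugate the gauge symmetry $g_{0}=-id$ (always in $Aut(H^{\infty}(E))$) by a M\"obius map at a \emph{non-central} $\gamma^{*}$, and the multiplicity hypothesis enters through the choice of $\gamma^{*}$ as a scalar multiple of a non-scalar partial isometry in a single block; the functional-calculus computation showing $h(0)=\tfrac{2}{1+\lambda^{2}}\gamma^{*}$ is correct (the factor $\tfrac{2\lambda}{1+\lambda^{2}}<1$ even keeps you inside the open disc), and the same center-preservation obstruction finishes the proof. What your route buys is a single uniform argument with no case split and no need to construct a center-breaking isometry; what it costs is the explicit partial-isometry computation, where the paper gets away with $\omega(\gamma^{*})$ for free. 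Two small points: the requirement $T^{*}T=I$ or $TT^{*}=I$ is superfluous (any partial isometry satisfies $TT^{*}T=T$, which is all you use); and your dismissal of the isolated-vertex case should be tied to the standing no-sources hypothesis of Theorem \ref{isometries} (which the paper's own proof also relies on), since if every high-multiplicity vertex were isolated neither your construction nor the paper's would produce the required non-central datum.
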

\begin{proof}
If $\mathfrak{Z}((E^{\sigma})^{*})=\{0\}$ then $G$ does not have any loops and $Aut(H^{\infty}(E))=N=\{\omega\in  Aut(\mathbb{D}({E^{\sigma}}^*))$ $|$ $\omega$ is an isometry $ \}  \ntrianglelefteq Aut(\mathbb{D}(E^{\sigma})^*)$ 
by \cref{isometriesnotnormal}. So we may assume that $\mathbb{D} \mathfrak{Z}((E^{\sigma})^{*})\neq \emptyset$. By \cite[Corollary 4.2]{Ardila2019} and \cref{isometries}, since the multiplicity of the representation of $\delta_{v}$ is greater than 1 for at least one vertex in $G$, we have that $\mathfrak{Z}((E^{\sigma})^{*}) \neq (E^{\sigma})^{*}$ 
and there is a linear isometry $\omega \in Aut((E^{\sigma})^*)$  not preserving $\mathfrak{Z}((E^{\sigma})^{*})$ 
Let $\gamma^{*} \in \mathbb{D} \mathfrak{Z}((E^{\sigma})^{*})$ such that $\omega(\gamma^{*}) \notin \mathbb{D} \mathfrak{Z}((E^{\sigma})^{*})$. Then $\omega \circ g_{\gamma} \circ \omega^{-1}(0)=\omega(\gamma^{*})  \notin \mathbb{D} \mathfrak{Z}((E^{\sigma})^{*})$. So $\omega \circ g_{\gamma} \circ \omega^{-1}$ does not preserve $\mathbb{D} \mathfrak{Z}((E^{\sigma})^{*})$. Thus $\omega$ does not normalize $Aut(H^{\infty}(E))$. So $Aut(H^{\infty}(E))\ntrianglelefteq Aut(\mathbb{D}(E^{\sigma})^*)$.
\end{proof}

\section{A Morita Equivalence Application}
In this section, we present an application of the automorphism group $Aut(\mathbb{D}({F^{\sigma}}^*))$ in the study of Morita equivalence of $W^{*}$-correspondences. One of the important features of Morita equivalence of rings, $C^{*}$-algebras, $W^{*}$-algebras, operator algebras, etc,  is that Morita equivalent objects have the same representation theory. More precisely, having Morita equivalent objects implies there is an equivalence between the categories of the relevant representation of those objects. Our goal in this section is to show that if two $W^{*}$-correspondences are weakly Morita equivalent then there is an equivalence between  certain categories of completely contractive covariant representations of both correspondences.

Let $(E, A)$ and $(F, B)$ be two (weakly) Morita equivalent $W^{*}$-correspondences. So there is a $W^{*}$-equivalence bimodule $X$ for which there is an $A$-$B$ \thinspace $W^{*}$-correspondence isomorphism $W$ from $E\overbar{\otimes}_{A}X$ onto $X \overbar{\otimes}_{B}F$ \cite[Definition 7]{Muhly2011b}. Let $\sigma:B\to B(H)$ be a normal representation of $B$, and let $\sigma ^{X}:A\to B(X \overbar{\otimes}_{\sigma}H)$ be the normal representation of $A$  induced by $X$ (Rieffel's induced representation). Let $\eta ^{*} \in \mathbb{D}({F^{\sigma}}^*)$. In \cite[3]{Muhly2011b}, Muhly and Solel showed that the map $\eta ^{*}\to {\eta ^{*}}^{X}$, where ${\eta ^{*}}^{X}=(I_{X}\otimes\eta ^{*})(W\otimes I_{H})$, is an isometric surjection from $\overline{\mathbb{D}({F^{\sigma}}^*)}$ onto $\overline{\mathbb{D}({E^{\sigma ^{X}*}})}$. Thus, this map is also an isometric surjection from $\mathbb{D}({F^{\sigma}}^*)$ onto $\mathbb{D}({E^{\sigma ^{X}*}})$.


Let $\sigma$-Covrep$F$ denote the category whose objects are the intertwiners in $\mathbb{D}({F^{\sigma}}^*)$. By \cite[Theorem 2.9 and Corollary 2.14]{Muhly2004a}, we can think of these objects as completely contractive covariant representations of $F$ (associated to $\sigma$) implementing normal completely contractive representations of $H^{\infty}(F)$. We point out that this set is a subset of the set of absolutely continuous completely contractive representations of $F$ (\cite[Definition 3.1]{Muhly2011a}), which are the representations implementing all extensions of completely contractive representations of $\mathcal{T}_{+}(E)$ to normal completely contractive representations of $H^{\infty}(F)$ (\cite[Theorem 4.11]{Muhly2011a}).


 Let the morphisms of $\sigma$-Covrep$F$ be given by:
\begin{equation*}
Hom(\eta_{1} ^{*},\eta_{2} ^{*})=\{g \in Aut(\mathbb{D}({F^{\sigma}}^*))\enspace | \enspace g(\eta_{1} ^{*})=\eta_{2} ^{*}\}
\end{equation*}

First we check that $\sigma$-Covrep$F$ is indeed a category. The composition of morphisms is given by the usual composition of  maps in $Aut(\mathbb{D}({F^{\sigma}}^*))$. If $\eta_{1} ^{*},\eta_{2} ^{*}, \eta_{3} ^{*}\in \mathbb{D}({F^{\sigma}}^*)$, $f \in$ Hom$(\eta_{1} ^{*},\eta_{2} ^{*})$ and $g \in$ Hom$(\eta_{2} ^{*},\eta_{3} ^{*})$, then $g\circ f \in$ Hom$(\eta_{1} ^{*},\eta_{3} ^{*})$, since $g\circ f (\eta_{1} ^{*})=\eta_{3}$. Since $Aut(\mathbb{D}({F^{\sigma}}^*))$ is a group, this composition of morphisms is associative. Clearly, the identity map in $Aut(\mathbb{D}({F^{\sigma}}^*))$ serves as an identity morphism Id$_{\eta ^{*}}\in$ Hom$(\eta ^{*},\eta ^{*})$ for each  $\eta ^{*}\in \mathbb{D}({F^{\sigma}}^*)$. 
Likewise, the category $\sigma ^{X}$-Covrep$E$ has the intertwiners in $\mathbb{D}({E^{\sigma ^{X}*}})$ as objects, and the morphisms are given by  Hom$({\eta_{1} ^{*}}^{X},{\eta_{2} ^{*}}^{X})=\{g \in Aut(\mathbb{D}({E^{\sigma ^{X}*}})) \enspace | \enspace g({\eta_{1} ^{*}}^{X})={\eta_{2} ^{*}}^{X}\}$.
\begin{theorem}\label{category}
If $(E, A)$ and $(F, B)$ are two (weakly) Morita equivalent $W^{*}$- correspondences and $\sigma:B\to B(H)$ is a normal representation of $B$, then $\sigma$-Covrep$F$ and $\sigma ^{X}$-Covrep$E$ are equivalent categories.
\end{theorem}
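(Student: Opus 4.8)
The plan is to produce an explicit functor between the two categories induced by the Muhly--Solel isometric surjection and then to verify it is an equivalence. Write $\Phi$ for the map $\eta^{*}\mapsto {\eta^{*}}^{X}=(I_{X}\otimes\eta^{*})(W\otimes I_{H})$, which by \cite{Muhly2011b} is an isometric surjection of $\mathbb{D}({F^{\sigma}}^*)$ onto $\mathbb{D}({E^{\sigma ^{X}*}})$. Since $\eta^{*}\mapsto I_{X}\otimes\eta^{*}$ is linear and right multiplication by the fixed operator $W\otimes I_{H}$ is linear, $\Phi$ is the restriction of a linear map; being an isometric bijection between the open unit balls, it is the restriction of a surjective linear isometry of $({F^{\sigma}})^{*}$ onto $({E^{\sigma ^{X}}})^{*}$, in particular biholomorphic, with inverse $\Phi^{-1}$ the analogous isometric surjection of $\mathbb{D}({E^{\sigma ^{X}*}})$ onto $\mathbb{D}({F^{\sigma}}^*)$. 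Consequently conjugation $g\mapsto \Phi\circ g\circ\Phi^{-1}$ is a group isomorphism from $Aut(\mathbb{D}({F^{\sigma}}^*))$ onto $Aut(\mathbb{D}({E^{\sigma ^{X}*}}))$, with inverse $h\mapsto\Phi^{-1}\circ h\circ\Phi$.

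Next I would define $\mathcal{G}\colon \sigma\text{-Covrep}F\to\sigma^{X}\text{-Covrep}E$ by $\mathcal{G}(\eta^{*})={\eta^{*}}^{X}$ on objects and $\mathcal{G}(g)=\Phi\circ g\circ\Phi^{-1}$ on morphisms, and carry out the elementary checks. On objects $\mathcal{G}$ is well defined and bijective, because the objects of $\sigma^{X}\text{-Covrep}E$ are precisely the intertwiners in $\mathbb{D}({E^{\sigma ^{X}*}})$, which is exactly $\Phi(\mathbb{D}({F^{\sigma}}^*))$. On morphisms, if $g\in\mathrm{Hom}(\eta_{1}^{*},\eta_{2}^{*})$ then $\mathcal{G}(g)\in Aut(\mathbb{D}({E^{\sigma ^{X}*}}))$ by the previous paragraph, and $\mathcal{G}(g)({\eta_{1}^{*}}^{X})=\Phi\bigl(g(\Phi^{-1}({\eta_{1}^{*}}^{X}))\bigr)=\Phi(g(\eta_{1}^{*}))=\Phi(\eta_{2}^{*})={\eta_{2}^{*}}^{X}$, so $\mathcal{G}(g)\in\mathrm{Hom}({\eta_{1}^{*}}^{X},{\eta_{2}^{*}}^{X})$. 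Functoriality is immediate: $\mathcal{G}(\mathrm{Id}_{\eta^{*}})=\Phi\circ\mathrm{id}\circ\Phi^{-1}=\mathrm{Id}_{{\eta^{*}}^{X}}$ and $\mathcal{G}(g\circ f)=\Phi\circ(g\circ f)\circ\Phi^{-1}=(\Phi\circ g\circ\Phi^{-1})\circ(\Phi\circ f\circ\Phi^{-1})=\mathcal{G}(g)\circ\mathcal{G}(f)$.

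Finally I would argue that $\mathcal{G}$ is an equivalence. It is surjective on objects, hence essentially surjective, since $\Phi$ is onto $\mathbb{D}({E^{\sigma ^{X}*}})$. It is fully faithful because, for fixed $\eta_{1}^{*},\eta_{2}^{*}$, the conjugation isomorphism restricts to a bijection $\mathrm{Hom}(\eta_{1}^{*},\eta_{2}^{*})\to\mathrm{Hom}({\eta_{1}^{*}}^{X},{\eta_{2}^{*}}^{X})$: it is injective because conjugation is injective, and it is onto because any $h\in Aut(\mathbb{D}({E^{\sigma ^{X}*}}))$ with $h({\eta_{1}^{*}}^{X})={\eta_{2}^{*}}^{X}$ satisfies $(\Phi^{-1}\circ h\circ\Phi)(\eta_{1}^{*})=\eta_{2}^{*}$, so $g:=\Phi^{-1}\circ h\circ\Phi\in\mathrm{Hom}(\eta_{1}^{*},\eta_{2}^{*})$ and $\mathcal{G}(g)=h$. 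Thus $\mathcal{G}$ is a fully faithful, essentially surjective functor, hence an equivalence of categories; being in fact bijective on objects and on each Hom-set, it is even an isomorphism of categories.

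There is no serious obstacle here: all the analytic content — that $\eta^{*}\mapsto{\eta^{*}}^{X}$ is an isometric surjection of $\mathbb{D}({F^{\sigma}}^*)$ onto $\mathbb{D}({E^{\sigma ^{X}*}})$, arising from the equivalence bimodule $X$ and the isomorphism $W$ — is imported from \cite{Muhly2011b}, and what remains is routine categorical bookkeeping. The one step that deserves an explicit line is the observation that $\Phi$, a priori merely an isometric surjection of the domains, is biholomorphic, so that conjugation by it is a legitimate operation on the automorphism groups; this is where one uses that $\Phi$ is the restriction of a linear map.
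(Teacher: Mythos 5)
Your proof is correct and follows essentially the same route as the paper: both transport objects and morphisms along the Muhly--Solel isometric surjection $\eta^{*}\mapsto{\eta^{*}}^{X}$, and your conjugation $\Phi\circ g\circ\Phi^{-1}$ is exactly the paper's $\mathscr{F}(g)$. The only difference is the final bookkeeping --- the paper exhibits an explicit quasi-inverse functor (built from $W'$ and $\widetilde{X}$) together with the naturality squares, whereas you conclude via fully faithful plus essentially surjective, in fact obtaining the slightly stronger conclusion that the categories are isomorphic.
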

\begin{proof}
Let $\mathscr{F}$ denote the functor from $\sigma$-Covrep$F$ to $\sigma ^{X}$-Covrep$E$, given by $\mathscr{F}(\eta ^{*} )= {\eta ^{*}}^{X}$. 
For each $g\in Aut(\mathbb{D}({F^{\sigma}}^*))$,  $\mathscr{F}(g)$ is defined by $\mathscr{F}(g)({\eta ^{*}}^{X})= (g{(\eta ^{*}}))^{X}$. That is , $\mathscr{F}(g)((I_{X}\otimes\eta ^{*})(W\otimes I_{H}))=(I_{X}\otimes g(\eta ^{*}))(W\otimes I_{H}))$. So if $g\in$ Hom$(\eta_{1} ^{*},\eta_{2} ^{*})$, then $\mathscr{F}(g)\in$ Hom$({\eta_{1} ^{*}}^{X},{\eta_{2} ^{*}}^{X})=$ Hom$(\mathscr{F}(\eta_{1} ^{*}),\mathscr{F}(\eta_{2} ^{*}))$. Clearly, $\mathscr{F}($Id$_{\eta ^{*}})=$Id$_{\mathscr{F}(\eta ^{*})}$. If $g,h\in Aut(\mathbb{D}({F^{\sigma}}^*))$ then
\begin{align*}
\mathscr{F}(g\circ h)((I_{X}\otimes\eta ^{*})(W\otimes I_{H}))&=(I_{X}\otimes (g\circ h)(\eta ^{*}))(W\otimes I_{H})\\
&=(I_{X}\otimes (g(h(\eta ^{*})))(W\otimes I_{H})\\
&=\mathscr{F}(g)((I_{X}\otimes (h(\eta ^{*}))(W\otimes I_{H}))\\
&=\mathscr{F}(g)(\mathscr{F}(h)((I_{X}\otimes \eta ^{*})(W\otimes I_{H})))\\
&=(\mathscr{F}(g)\circ \mathscr{F}(h))((I_{X}\otimes \eta ^{*})(W\otimes I_{H}))
\end{align*}
So $\mathscr{F}(g\circ h)=\mathscr{F}(g)\circ \mathscr{F}(h)$. Thus $\mathscr{F}$ is an isometric covariant functor from $\sigma$-Covrep$F$ to $\sigma ^{X}$-Covrep$E$.

Since $W:E\otimes _{A}X \to X \otimes _{B}F $,is a $W^{*}$-correspondence isomorphism, there is an isomorphism $W':{_{B}F}_{B} \to {_{B}\widetilde{X}}\otimes _{A}E\otimes _{A}X_{B} $. Let $\mathscr{G}$ be the functor from $\sigma ^{X}$-Covrep$E$ to $\sigma$-Covrep$F$ given by $\mathscr{G}({\eta ^{*}}^{X})=(I_{\widetilde{X}}\otimes {\eta ^{*}}^{X})(W'\otimes I_{H})$. That is, $\mathscr{G}((I_{X}\otimes \eta ^{*})(W\otimes I_{H}))=(I_{\widetilde{X}}\otimes (I_{X}\otimes \eta ^{*})(W\otimes I_{H}))(W'\otimes I_{H})$.

\noindent
Since $||(I_{\widetilde{X}}\otimes {\eta ^{*}}^{X})(W'\otimes I_{H})||=||{\eta ^{*}}^{X}||$, $(I_{\widetilde{X}}\otimes {\eta ^{*}}^{X})(W'\otimes I_{H})$ lies in $\mathbb{D}({E^{\sigma *}})$.

\noindent
For any $g \in Aut(\mathbb{D}({E^{\sigma ^{X}*}}))$, $\mathscr{G}(g)\in Aut(\mathbb{D}({F^{\sigma}}^*))$ is defined by:
\begin{align*}
\mathscr{G}(g)((I_{\widetilde{X}}\otimes {\eta ^{*}}^{X})(W'\otimes I_{H}))=(I_{\widetilde{X}}\otimes g({\eta ^{*}}^{X}))(W'\otimes I_{H})
\end{align*}
Then $\mathscr{G}($Id$_{{\eta ^{*}}^{X}})=$Id$_{\mathscr{G}({\eta ^{*}}^{X})}$ and 
\begin{align*}
\mathscr{G}(g\circ h)((I_{\widetilde{X}}\otimes {\eta ^{*}}^{X})(W'\otimes I_{H}))&=((I_{\widetilde{X}}\otimes (g\circ h)({\eta ^{*}}^{X}))(W'\otimes I_{H}))\\
&=((I_{\widetilde{X}}\otimes g(h({\eta ^{*}}^{X})))(W'\otimes I_{H}))\\
&= \mathscr{G}(g)((I_{\widetilde{X}}\otimes h({\eta ^{*}}^{X}))(W'\otimes I_{H}))\\
&=\mathscr{G}(g)(\mathscr{G}(h)((I_{\widetilde{X}}\otimes {\eta ^{*}}^{X})(W'\otimes I_{H})))\\
&=(\mathscr{G}(g)\circ \mathscr{G}(h))((I_{\widetilde{X}}\otimes {\eta ^{*}}^{X})(W'\otimes I_{H}))
\end{align*}
Thus $\mathscr{G}(g\circ h)=\mathscr{G}(g)\circ \mathscr{G}(h)$. Also, if ${\eta_{1} ^{*}}^{X}, {\eta_{2} ^{*}}^{X}\in \sigma ^{X}$-Covrep$E$ and $g\in$  Hom$_{\sigma ^{X}-CovrepE}$ $({\eta_{1} ^{*}}^{X},{\eta_{2} ^{*}}^{X})$, then
\begin{align*}
\mathscr{G}(g)(\mathscr{G}({\eta_{1} ^{*}}^{X}))&=\mathscr{G}(g)((I_{\widetilde{X}}\otimes {\eta_{1} ^{*}}^{X})(W'\otimes I_{H}))=(I_{\widetilde{X}}\overbar {\otimes}g({\eta_{1} ^{*}}^{X}))(W'\otimes I_{H}))\\
&=(I_{\widetilde{X}}\otimes {\eta_{2} ^{*}}^{X})(W'\otimes I_{H})=\mathscr{G}({\eta_{2} ^{*}}^{X})
\end{align*}
So $\mathscr{G}(g)\in$ Hom$_{\sigma -CovrepF}$ $(\mathscr{G}({\eta_{1} ^{*}}^{X}), \mathscr{G}({\eta_{2} ^{*}}^{X})$). Thus $\mathscr{G}$ is an isometric covariant functor from $\sigma ^{X}$-Covrep$E$ to  $\sigma$-Covrep$F$.

Now we show that $\mathscr{F}$ and $\mathscr{G}$ are inverses of each other. That is, $\mathscr{F}$ and $\mathscr{G}$ implement and equivalence between the categories  $\sigma$-Covrep$F$ and $\sigma ^{X}$-Covrep$E$. The natural transformation $\epsilon:I_{\sigma-CovrepF}\to \mathscr{G}\circ \mathscr{F}$, where $I_{\sigma-CovrepF}$ denotes the identity functor on $\sigma$-Covrep$F$, is given by $\epsilon_{\eta ^{*}}(I_{\sigma-CovrepF}(\eta ^{*}))= (\mathscr{G}\circ \mathscr{F})(\eta ^{*})= (I_{\widetilde{X}}\otimes (I_{X}\otimes \eta ^{*})(W\otimes I_{H}))(W'\otimes I_{H}) $.

Let $\eta_{1} ^{*},\eta_{2} ^{*} \in \mathbb{D}({F^{\sigma}}^*)$ and $g\in$Hom$( \eta_{1} ^{*},\eta_{2} ^{*})$. Then we have
\begin{align*}
\epsilon_{\eta_{2} ^{*}}\circ I_{\sigma-CovrepF}(g)(I_{\sigma-CovrepF}(\eta_{1} ^{*}))&=\epsilon_{\eta_{2} ^{*}}(g(\eta_{1} ^{*}))=\epsilon_{\eta_{2} ^{*}}(\eta_{2} ^{*})\\
&=(I_{\widetilde{X}}\otimes (I_{X}\otimes \eta_{2} ^{*})(W\otimes I_{H}))(W'\otimes I_{H})\\
&=\mathscr{G}((I_{X}\otimes \eta_{2} ^{*})(W\otimes I_{H}))\\
&=\mathscr{G}((I_{X}\otimes g(\eta_{1} ^{*}))(W\otimes I_{H}))\\
&=\mathscr{G}(\mathscr{F}(g)((I_{X}\otimes \eta_{1} ^{*})(W\otimes I_{H})))\\
&=\mathscr{G}(\mathscr{F}(g))(\mathscr{G}((I_{X}\otimes \eta_{1} ^{*})(W\otimes I_{H}))\\
&=(\mathscr{G}\circ \mathscr{F})(g)((I_{\widetilde{X}}\otimes (I_{X}\otimes \eta_{1} ^{*})(W\otimes I_{H}))(W'\otimes I_{H}))\\
&=(\mathscr{G}\circ \mathscr{F})(g)\circ \epsilon_{\eta_{1} ^{*}}(I_{\sigma-CovrepF}(\eta_{1} ^{*}))
\end{align*}
Thus $\epsilon_{\eta_{2} ^{*}}\circ I_{\sigma-CovrepF}(g)=(\mathscr{G}\circ \mathscr{F})(g)\circ \epsilon_{\eta_{1} ^{*}}$. That is, the following diagram commutes.
\[
\begin{tikzcd}
I_{\sigma-CovrepF}(\eta_{1} ^{*}) \arrow{r}{g}
\arrow[swap]{d}{\epsilon_{\eta_{1} ^{*}}} & I_{\sigma-CovrepF}(\eta_{2} ^{*}) \arrow{d}{\epsilon_{\eta_{2} ^{*}}} \\
(\mathscr{G}\circ \mathscr{F})({\eta_{1} ^{*}}) \arrow{r}{(\mathscr{G}\circ \mathscr{F})(g)}& (\mathscr{G}\circ \mathscr{F})({\eta_{2} ^{*}})
\end{tikzcd}
\]
So indeed, $\epsilon$ is a natural transformation from $I_{\sigma-CovrepF}$ to $\mathscr{G}\circ \mathscr{F}$.

The natural transformation $\lambda:\mathscr{F}\circ \mathscr{G} \to I_{\sigma^{X}-CovrepE}$, where $I_{\sigma^{X}-CovrepE}$ denotes the identity functor on $\sigma^{X}-CovrepE$, is given by $\lambda_{{\eta ^{*}}^{X}}((\mathscr{F}\circ \mathscr{G})({\eta ^{*}}^{X}))={\eta ^{*}}^{X}$. That is,
\begin{align*}
\lambda_{{\eta ^{*}}^{X}}((I_{X}\otimes (I_{\widetilde{X}}\otimes {\eta ^{*}}^{X})(W'\otimes I_{H}))(W\otimes I_{H})
 )={\eta ^{*}}^{X}
\end{align*}
Let ${\eta_{1} ^{*}}^{X}, {\eta_{2} ^{*}}^{X}\in \mathbb{D}({E^{\sigma ^{X}*}})$ and $g\in$Hom$({\eta_{1} ^{*}}^{X}, {\eta_{2} ^{*}}^{X} )$. Then we have
\begin{align*}
\lambda_{{\eta_{2} ^{*}}^{X}}\circ &(\mathscr{F}\circ \mathscr{G})(g)((\mathscr{F}\circ \mathscr{G})({\eta_{1} ^{*}}^{X})\\
&=\lambda_{{\eta_{2} ^{*}}^{X}}\circ (\mathscr{F}\circ \mathscr{G})(g)((I_{X}\otimes (I_{\widetilde{X}}\otimes {\eta_{1} ^{*}}^{X})(W'\otimes I_{H}))(W\otimes I_{H})) \\
&=\lambda_{{\eta_{2} ^{*}}^{X}}  (\mathscr{F}(\mathscr{G}(g))(\mathscr{F}((I_{\widetilde{X}}\otimes {\eta_{1} ^{*}}^{X})(W'\otimes I_{H})))) \\
&=\lambda_{{\eta_{2} ^{*}}^{X}}  (\mathscr{F}(\mathscr{G}(g)((I_{\widetilde{X}}\otimes {\eta_{1} ^{*}}^{X})(W'\otimes I_{H}))))  \\
&=\lambda_{{\eta_{2} ^{*}}^{X}}  (\mathscr{F}((I_{\widetilde{X}}\otimes {\eta_{2} ^{*}}^{X})(W'\otimes I_{H}))) \\
&=\lambda_{{\eta_{2} ^{*}}^{X}} ((I_{X}\otimes (I_{\widetilde{X}}\otimes {\eta_{2} ^{*}}^{X})(W'\otimes I_{H}))(W\otimes I_{H}))  \\
&={\eta_{2} ^{*}}^{X}\\
&=g({\eta_{1} ^{*}}^{X})\\
&=I_{\sigma^{X}-CovrepE}(g)\circ \lambda_{{\eta_{1} ^{*}}^{X}}
((I_{X}\otimes (I_{\widetilde{X}}\otimes {\eta_{1} ^{*}}^{X})(W'\otimes I_{H}))(W\otimes I_{H})) \\
&=I_{\sigma^{X}-CovrepE}(g)\circ \lambda_{{\eta_{1} ^{*}}^{X}}
(\mathscr{F}((I_{\widetilde{X}}\otimes {\eta_{1} ^{*}}^{X})(W'\otimes I_{H}))) \\
&=I_{\sigma^{X}-CovrepE}(g)\circ \lambda_{{\eta_{1} ^{*}}^{X}}((\mathscr{F}\circ \mathscr{G})((I_{\widetilde{X}}\otimes {\eta_{1} ^{*}}^{X})(W'\otimes I_{H})))\\
&=I_{\sigma^{X}-CovrepE}(g)\circ \lambda_{{\eta_{1} ^{*}}^{X}}((\mathscr{F}\circ \mathscr{G})({\eta_{1} ^{*}}^{X}))
\end{align*}
Thus $\lambda_{{\eta_{2} ^{*}}^{X}}\circ (\mathscr{F}\circ \mathscr{G})(g)=I_{\sigma^{X}-CovrepE}(g)\circ \lambda_{{\eta_{1} ^{*}}^{X}}$. That is, the following diagram commutes.
\[
\begin{tikzcd}
(\mathscr{F}\circ \mathscr{G})({\eta_{1} ^{*}}^{X}) \arrow{r}{g}
\arrow[swap]{d}{\lambda_{{\eta_{1} ^{*}}^{X}}} & (\mathscr{F}\circ \mathscr{G})({\eta_{2} ^{*}}^{X}) \arrow{d}{\lambda_{{\eta_{2} ^{*}}^{X}}} \\
I_{\sigma^{X}-CovrepE}(\eta_{1} ^{*}) \arrow{r}{(\mathscr{G}\circ \mathscr{F})(g)}& I_{\sigma^{X}-CovrepE}(\eta_{2} ^{*})
\end{tikzcd}
\]
So indeed, $\lambda$ is a natural transformation from $\mathscr{F}\circ \mathscr{G}$ to $I_{\sigma^{X}-CovrepE}$. 

\noindent The categories $\sigma$-Covrep$F$ and $\sigma ^{X}$-Covrep$E$ are equivalent.
\end{proof}

\bibliographystyle{amsalpha}
\bibliography{ReneMaster}

\def\cprime{$'$}
\providecommand{\bysame}{\leavevmode\hbox to3em{\hrulefill}\thinspace}
\providecommand{\MR}{\relax\ifhmode\unskip\space\fi MR }
\providecommand{\MRhref}[2]{%
  \href{http://www.ams.org/mathscinet-getitem?mr=#1}{#2}
}
\providecommand{\href}[2]{#2}
\begin{thebibliography}{HKMS09}

\bibitem[Ard19]{Ardila2019}
Rene Ardila, \emph{{Morita Equivalence of {$W^\ast$}-Correspondences and Their
  Hardy Algebras}}, Complex Anal. Oper. Theory \textbf{13} (2019), no.~5,
  2411--2441.

\bibitem[DP98a]{DPit98b}
Kenneth~R. Davidson and David~R. Pitts, \emph{{Nevanlinna-{P}ick interpolation
  for non-commutative analytic {T}oeplitz algebras}}, Integral Equations
  Operator Theory \textbf{31} (1998), no.~3, 321--337. \MR{1627901
  (2000g:47016)}

\bibitem[DP98b]{DPit98a}
\bysame, \emph{{The algebraic structure of non-commutative analytic {T}oeplitz
  algebras}}, Math. Ann. \textbf{311} (1998), no.~2, 275--303. \MR{1625750
  (2001c:47082)}

\bibitem[Har74]{Harris1974}
Lawrence~A. Harris, \emph{Bounded symmetric homogeneous domains in infinite
  dimensional spaces}, Proceedings on {I}nfinite {D}imensional {H}olomorphy
  ({I}nternat. {C}onf., {U}niv. {K}entucky, {L}exington, {K}y., 1973),
  Springer, Berlin, 1974, pp.~13--40. Lecture Notes in Math., Vol. 364.
  \MR{0407330}

\bibitem[HKM11a]{HKM2011b}
J.~William Helton, Igor Klep, and Scott McCullough, \emph{{Analytic mappings
  between noncommutative pencil balls}}, J. Math. Anal. Appl. \textbf{376}
  (2011), no.~2, 407--428. \MR{2747767 (2012b:46135)}

\bibitem[HKM11b]{HKM2011a}
\bysame, \emph{{Proper analytic free maps}}, J. Funct. Anal. \textbf{260}
  (2011), no.~5, 1476--1490. \MR{2749435}

\bibitem[HKMS09]{HKMS2009}
J.~William Helton, Igor Klep, Scott McCullough, and Nick Slinglend,
  \emph{{Noncommutative ball maps}}, J. Funct. Anal. \textbf{257} (2009),
  no.~1, 47--87. \MR{2523335 (2011b:47037)}

\bibitem[KVV09]{K-VV2009}
D.S. Kaliuzhnyi-Verbovetskyi and Victor Vinnikov, \emph{{Singularities of
  rational functions and minimal factorizations: the noncommutative and the
  commutative setting}}, Linear Algebra Appl. \textbf{430} (2009), no.~4,
  869--889. \MR{2489365 (2010a:47032)}

\bibitem[KVV12]{Kaliuzhnyi2012}
\bysame, \emph{{Noncommutative rational functions, their
  difference-differential calculus and realizations}}, Multidimensional Systems
  and Signal Processing \textbf{23} (2012), no.~1, 49--77.

\bibitem[KVV14]{kaliuzhnyi2014foundations}
\bysame, \emph{Foundations of free noncommutative function theory},
  Mathematical Surveys and Monographs, American Mathematical Society, 2014.

\bibitem[Mos04]{Mostafazadeh2004}
Ali. Mostafazadeh, \emph{{Pseudounitary operators and pseudounitary quantum
  dynamics}}, Journal of mathematical physics \textbf{45} (2004), no.~3,
  932--946.

\bibitem[MS04]{Muhly2004a}
Paul~S. Muhly and Baruch Solel, \emph{{Hardy algebras,
  {$W^\ast$}-correspondences and interpolation theory}}, Math. Ann.
  \textbf{330} (2004), no.~2, 353--415. \MR{2089431 (2006a:46073)}

\bibitem[MS08]{Muhly2008b}
\bysame, \emph{{Schur class operator functions and automorphisms of {H}ardy
  algebras}}, Doc. Math. \textbf{13} (2008), 365--411. \MR{2520475
  (2010g:46098)}

\bibitem[MS09]{Muhly2009}
\bysame, \emph{{The {P}oisson kernel for {H}ardy algebras}}, Complex Anal.
  Oper. Theory \textbf{3} (2009), no.~1, 221--242. \MR{2481905 (2011a:47166)}

\bibitem[MS11a]{Muhly2011b}
\bysame, \emph{{Morita transforms of tensor algebras}}, New York J. Math.
  \textbf{17A} (2011), 87--100. \MR{2782729}

\bibitem[MS11b]{Muhly2011a}
\bysame, \emph{{Representations of {H}ardy algebras: absolute continuity,
  intertwiners, and superharmonic operators}}, Integral Equations Operator
  Theory \textbf{70} (2011), no.~2, 151--203. \MR{2794388 (2012g:47218)}

\bibitem[MS13]{Muhly2013}
\bysame, \emph{{Tensorial {F}unction {T}heory: {F}rom {B}erezin {T}ransforms to
  {T}aylor's {T}aylor {S}eries and {B}ack}}, Integral Equations Operator Theory
  \textbf{76} (2013), no.~4, 463--508. \MR{3073943}

\bibitem[Pas73]{Paschke1973}
William~L. Paschke, \emph{Inner product modules over {$B^{\ast} $}-algebras},
  Trans. Amer. Math. Soc. \textbf{182} (1973), 443--468. \MR{0355613 (50
  \#8087)}

\bibitem[Pop91]{Popescu1991}
Gelu Popescu, \emph{{von {N}eumann inequality for {$(B(\mathcal{H})^n)_1$}}},
  Math. Scand. \textbf{68} (1991), no.~2, 292--304. \MR{1129595 (92k:47073)}

\bibitem[Tay72]{Tay72c}
Joseph~L. Taylor, \emph{{A general framework for a multi-operator functional
  calculus}}, Advances in Math. \textbf{9} (1972), 183--252. \MR{0328625 (48
  \#6967)}

\bibitem[Voi05]{Voi2005}
Dan Voiculescu, \emph{{Free probability and the von {N}eumann algebras of free
  groups}}, Rep. Math. Phys. \textbf{55} (2005), no.~1, 127--133. \MR{2126420
  (2005k:46180)}

\bibitem[Voi10]{Voi2010}
Dan-Virgil Voiculescu, \emph{{Free analysis questions {II}: the {G}rassmannian
  completion and the series expansions at the origin}}, J. Reine Angew. Math.
  \textbf{645} (2010), 155--236. \MR{2673426 (2012b:46144)}

\end{thebibliography}

\end{document}